\title{On A New Class of Tempered Stable Distributions: \\ Moments and Regular Variation}
\author{Michael Grabchak\footnote{Email address: mgrabcha@uncc.edu}, \textit{University of North Carolina Charlotte}}
\begin{document}

\newtheorem{prop}{Proposition}
\newtheorem{thrm}[prop]{Theorem}
\newtheorem{defn}[prop]{Definition}
\newtheorem{cor}[prop]{Corollary}

\newtheorem{proposition}{Proposition}
\newtheorem{theorem}[prop]{Theorem}
\newtheorem{definition}[prop]{Definition}
\newtheorem{corollary}[prop]{Corollary}

\newtheorem{lemma}[prop]{Lemma}
\newtheorem{remark}[prop]{Remark}
\newcommand{\rd}{\mathrm d}
\newcommand{\rE}{\mathrm E}
\newcommand{\ts}{TS^p_\alpha}
\newcommand{\ets}{ETS^p_\alpha}
\newcommand{\tr}{\mathrm{tr}}
\newcommand{\iid}{\stackrel{\mathrm{iid}}{\sim}}
\newcommand{\eqd}{\stackrel{d}{=}}
\newcommand{\cond}{\stackrel{d}{\rightarrow}}
\newcommand{\conv}{\stackrel{v}{\rightarrow}}
\newcommand{\conw}{\stackrel{w}{\rightarrow}}
\newcommand{\conp}{\stackrel{p}{\rightarrow}}
\newcommand{\confdd}{\stackrel{fdd}{\rightarrow}}
\newcommand{\plim}{\mathop{\mathrm{p\mbox{-}lim}}}

\maketitle

\begin{abstract}
 
We extend the class of tempered stable distributions first introduced in Rosi\'nski 2007 \cite{Rosinski:2007}. Our new class allows for more structure and more variety of tail behaviors. We discuss various subclasses and the relation between them. To characterize the possible tails we give detailed results about finiteness of various moments. We also give necessary and sufficient conditions for the tails to be regularly varying. This last part allows us to characterize the domain of attraction to which a particular tempered stable distribution belongs.

\end{abstract}

\section{Introduction}

Tempered stable distributions were defined in Rosi\'nski 2007 \cite{Rosinski:2007} as a class of models obtained by modifying the L\'evy measures of stable distributions by multiplying their densities by completely monotone functions. This allows for models that are similar to stable distributions in some central region, but possess lighter (i.e.\ tempered) tails. It has been observed that these models provide a good fit to data in a variety of applications. These include mathematical finance \cite{Carr:Geman:Madan:Yor:2002} \cite{Kim:Rachev:Bianchi:Fabozzi:2010:Tempered}, biostatistics \cite{Aalen:1992} \cite{Palmer:Ridout:Morgan:2008}, computer science \cite{Terdik:Gyires:2009b}, and physics \cite{Bruno:Sorriso-Valvo:Carbone:Bavassano:2004} \cite{Meerschaert:Zhang:Baeumer:2008}. An explanation for why such models might appear in applications is given in \cite{Grabchak:Samorodnitsky:2010}.

The purpose of this paper is two-fold. First, we provide necessary and sufficient conditions for tempered stable distributions to have regularly varying tails. This is an important question both from a theoretical perspective, since it will allow us to classify which domain of attraction a tempered stable distribution belongs to, and from an applied point of view, since such models are often used in practice. Our second purpose is to introduce the class of $p$-tempered $\alpha$-stable distributions, where $p>0$ and $\alpha<2$. The parameter $p$ controls the amount of tempering, while $\alpha$ is the index of stability of the corresponding stable distribution. Clearly the case where $\alpha\le0$ no longer has any meaning in terms of tempering stable distributions, however it allows the class to be more flexible. In fact, within certain subclasses, the case where $\alpha\le 0$ has been shown to provide a good fit to data, see e.g.\ \cite{Aalen:1992} or \cite{Carr:Geman:Madan:Yor:2002}.

This class combines a number of important subclasses that have been studied separately in the literature. In particular, when $p=1$ and $\alpha\in(0,2)$ it coincides with Rosi\'nski's \cite{Rosinski:2007} tempered stable distributions. When $p=2$ and $\alpha\in[0,2)$ it coincides with the class of tempered infinitely divisible distributions defined in \cite{Bianchi:Rachev:Kim:Fabozzi:2011}. If we allow the distributions to have a Gaussian part, then we would have the class $J_{\alpha,p}$ defined in \cite{Maejima:Nakahara:2009}. This, in turn, contains important subclasses including the Thorin class (when $p=1$ and $\alpha=0$), the Goldie-Steutel-Bondesson class (when $p=1$ and $\alpha=-1$), the class of type $M$ distributions (when $p=2$ and $\alpha=0$), and the class of type $G$ distributions (when $p=2$ and $\alpha=-1$). For more information on these classes see \cite{Barndorff-Nielsen:Maejima:Sato:2006}, \cite{Aoyama:Maejima:Rosinski:2008}, and the references therein.

This paper is structured as follows. In Section \ref{sec: defns} we define $p$-tempered $\alpha$-stable distributions and state some basic results. We show that, as with tempered stable distributions, for a fixed $\alpha$ and $p$, all elements of this class are uniquely determined by a Rosi\'nski measure $R$ and a shift $b$. The remaining two sections are concerned with relating the tails of the Rosi\'nski measure to the tails of the distribution. In Section \ref{sec: moments} we give necessary and sufficient conditions for the existence of moments and exponential moments. We also give explicit formulas for the cumulants. Finally, in Section \ref{sec: Reg Var for TS} we give necessary and sufficient conditions for the tails to be regularly varying. Specifically, we show that the tails of a $p$-tempered $\alpha$-stable distribution are regularly varying if and only if the tails of the corresponding Rosi\'nski measure are regularly varying.

Before proceeding, recall that the characteristic function of an infinitely divisible distribution $\mu$ on $\mathbb R^d$  can be written as $\hat\mu(z) = \exp\{C_{\mu}(z)\}$ where
\begin{eqnarray}\label{eq: inf div char func}
C_{\mu}(z) = -\frac{1}{2}\langle z,Az\rangle + i\langle b,z\rangle + \int_{\mathbb R^d}\left(e^{i\langle z,x\rangle}-1-i\frac{\langle z,x\rangle}{1+|x|^2}\right)M(\rd x),
\end{eqnarray}
$A$ is a symmetric nonnegative-definite $d\times d$ matrix, $b\in\mathbb R^d$, and $M$ satisfies
\begin{eqnarray}\label{eq: cond for levy measure}
M(\{0\})=0 \mbox{\ and\ } \int_{\mathbb R^d}(|x|^2\wedge1)M(\rd x)<\infty.
\end{eqnarray}
The measure $\mu$ is uniquely identified by the L\'evy triplet $(A,M,b)$ and we write $\mu= ID(A,M,b)$.

\section{$p$-Tempered $\alpha$-Stable Distributions}\label{sec: defns}

Recall that for $\alpha\in(0,2)$ the L\'evy measure of an $\alpha$-stable distribution with spectral measure $\sigma$ is given by
\begin{eqnarray}\label{eq: M alpha in TS}
L(B) = \int_{\mathbb S^{d-1}}\int_0^\infty 1_B(ru)r^{-\alpha-1}\rd r\sigma(\rd u), \quad B\in\mathfrak B(\mathbb R^d).
\end{eqnarray}
By analogy, we define the following.

\begin{definition} \label{defn: pTalphaS}
Fix $\alpha<2$ and $p>0$. An infinitely divisible probability measure $\mu$ is called a $\mathbf p$\textbf{-tempered} $\mathbf \alpha$\textbf{-stable distribution} if it has no Gaussian part and its L\'evy measure is given by
\begin{eqnarray}\label{eq:m}
M(B) =\int_{\mathbb S^{d-1}}\int_0^\infty 1_B(ru)q(r^p,u)r^{-\alpha-1}\rd r\sigma(\rd u), \quad B\in\mathfrak B(\mathbb R^d),
\end{eqnarray}
where $\sigma$ is a finite Borel measure on $\mathbb S^{d-1}$ and $q:(0,\infty)\times\mathbb S^{d-1}\mapsto\mathbb (0,\infty)$ is a Borel function such that for all $u\in\mathbb S^{d-1}$ $q(\cdot,u)$ is completely monotone and
\begin{eqnarray}\label{eq: q goes to zero}
\lim_{r\rightarrow\infty}q(r,u)=0.
\end{eqnarray}
We denote the class of $p$-tempered $\alpha$-stable distributions by $\ts$. If, in addition, $\lim_{r\downarrow0}q(r,u)=1$ for every $u\in \mathbb S^{d-1}$ then $\mu$ is called a \textbf{proper} $\mathbf p$\textbf{-tempered} $\mathbf \alpha$\textbf{-stable distribution}.
\end{definition}

\begin{remark}\label{remark: Bern}
The complete monotonicity of $q(\cdot,u)$ implies that for each $u\in\mathbb S^{d-1}$ the function $q(r,u)$ is differentiable and monotonely decreasing in $r$. Moreover, by Bernstein's Theorem (see e.g.\ Theorem 1a in Section XIII.4 of \cite{Feller:1971}),
\begin{eqnarray}\label{eq:Q(|)}
q(r^p,u) = \int_{(0,\infty)} e^{-r^ps}Q_u(\mathrm d s)
\end{eqnarray}
for some measurable family $\{Q_u\}_{u\in \mathbb S^{d-1}}$ of Borel measures on $(0,\infty)$. For a guarantee that we can take the family to be measurable see Remark 3.2 in \cite{Barndorff-Nielsen:Maejima:Sato:2006}. Note that the condition $\lim_{r\downarrow0}q(r,u)=1$ for every $u\in \mathbb S^{d-1}$ is equivalent to the condition that $\left\{Q_u\right\}_{u\in \mathbb S^{d-1}}$ is a family of probability measures.
\end{remark}

\begin{remark}
From \eqref{eq:Q(|)} it follows that as $p$ increases the tails of $M$ (as given in \eqref{eq:m}) go to zero
quicker. In this sense $p$ controls the extent to which the tails of the L\'evy measure are tempered.
\end{remark}

\begin{remark}
For $\alpha\in(0,2)$ and $p>0$, all proper $p$-tempered $\alpha$-stable distributions belong to the class of generalized tempered stable distributions defined in \cite{Rosinski:Sinclair:2010}. Many important results about their L\'evy processes are given there. These include short time behavior, conditions for absolute continuity with respect to the underlying stable process, and a series representation, see Theorems 3.1, 4.1, and 5.5 in \cite{Rosinski:Sinclair:2010} for details. 
\end{remark}

\begin{remark}
From Theorem 15.10 in \cite{Sato:1999} it follows that $p$-tempered $\alpha$-stable distributions are selfdecomposable if and only if $q(r^p,u)r^{-\alpha}$ is a decreasing function of $r$ for every $u\in\mathbb S^{d-1}$. By Remark \ref{remark: Bern} this always holds when $\alpha\in[0,2)$. Thus, when $\alpha\in[0,2)$, $p$-tempered $\alpha$-stable distributions inherit properties of selfdecomposable distributions. In particular, if they are nondegenerate then they are absolutely continuous with respect to Lebesgue measure in $d$-dimensions and when $d=1$ they are unimodal.
\end{remark}

Following \cite{Rosinski:2007}, we will reparametrize the L\'evy measure $M$ into a form that is often easier to work with. Let $Q$ be a Borel measure on $\mathbb{R}^d$ given by
\begin{eqnarray}\label{eq: Q}
Q(A) = \int_{\mathbb S^{d-1}}\int_{(0,\infty)} 1_A(ru)Q_u(\rd r)\sigma(\rd u), &
A\in\mathfrak{B}(\mathbb{R}^d).
\end{eqnarray}
Note that $Q(\{0\})=0$. Define a Borel measure $R$ on $\mathbb{R}^d$ by
\begin{eqnarray}\label{eq:R}
R(A) =
\int_{\mathbb{R}^d}1_A\left(\frac{x}{|x|^{1+1/p}}\right)|x|^{\alpha/p}Q(\rd x), & A\in\mathfrak{B}(\mathbb{R}^d),
\end{eqnarray}
and again note that $R(\{0\})=0$. To get the inverse transformation we have
\begin{eqnarray}\label{eq: Q in terms of R}
Q(A) = \int_{\mathbb{R}^d}1_A\left(\frac{x}{|x|^{p+1}}\right)|x|^{\alpha}R(\rd x), & A\in\mathfrak{B}(\mathbb{R}^d).
\end{eqnarray}
The following result extends Theorem 2.3 of \cite{Rosinski:2007}.

\begin{theorem}\label{thrm:cond on R}
Fix $p>0$. Let $M$ be given by \eqref{eq:m} and let $R$ be given by \eqref{eq:R}.\\
1.  We can write
\begin{eqnarray}\label{eq:levy m}
M(A) = \int_{\mathbb{R}^d}\int_0^\infty
1_A(tx)t^{-1-\alpha}e^{-t^p}\rd t R(\rd x), &
A\in\mathfrak{B}(\mathbb{R}^d),
\end{eqnarray}
or equivalently,
\begin{eqnarray}\label{eq:levy m2}
M(A) = p^{-1}\int_{\mathbb{R}^d}\int_0^\infty
1_A(t^{1/p}x)t^{-1-\alpha/p}e^{-t}\rd t R(\rd x), &
A\in\mathfrak{B}(\mathbb{R}^d).
\end{eqnarray}
2. \eqref{eq:levy m} defines a L\'evy measure if and only if either $R=0$ or the following hold: 
\begin{eqnarray}\label{eq: R(0)=0}
\alpha<2,\ R(\{0\})=0,
\end{eqnarray}
and
\begin{eqnarray}\label{eq: integ cond on R}
\int_{\mathbb R^d} \left(|x|^2\wedge|x|^\alpha\right)R(\rd x)<\infty & \mathrm{if} \ \alpha\in(0,2),\nonumber\\
\int_{\mathbb R^d} \left(|x|^2\wedge[1+\log^+|x|]\right)R(\rd x)<\infty & \mathrm{if}  \ \alpha=0,\\
\int_{\mathbb R^d} \left(|x|^2\wedge1\right)R(\rd x)<\infty & \mathrm{if} \ \alpha<0.\nonumber
\end{eqnarray}
Moreover, when $R$ satisfies these conditions, $M$ is the L\'evy measure of a $p$-tempered $\alpha$-stable distribution and it uniquely determines $R$.\\
3.  A $p$-tempered $\alpha$-stable distribution is proper if and only if in addition to \eqref{eq: R(0)=0} and \eqref{eq: integ cond on R} $R$ satisfies
\begin{eqnarray}\label{eq: cond for proper}
\int_{\mathbb R^d}|x|^\alpha R(\rd x)<\infty.
\end{eqnarray}
4. If $R$ satisfies \eqref{eq: cond for proper} then in \eqref{eq:m} the measure $\sigma$ is given by
\begin{eqnarray}
\sigma(B) = \int_{\mathbb R^d} 1_B\left(\frac{x}{|x|}\right)|x|^\alpha R(\rd x), \quad B\in\mathfrak B(\mathbb S^{d-1}).
\end{eqnarray}
\end{theorem}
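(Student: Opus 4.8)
The plan is to establish the four parts in order, leaning on the explicit change-of-variables machinery set up just before the statement.

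For part~1, I would substitute the Bernstein representation \eqref{eq:Q(|)} into \eqref{eq:m} and apply Tonelli's theorem (every integrand is nonnegative) to interchange the $\rd r$-integral with the $Q_u$-integral, obtaining
\begin{equation*}
M(B) = \int_{\mathbb S^{d-1}}\int_{(0,\infty)}\left[\int_0^\infty 1_B(ru)e^{-r^ps}r^{-\alpha-1}\,\rd r\right]Q_u(\rd s)\,\sigma(\rd u).
\end{equation*}
In the inner integral I would change variables $r = s^{-1/p}t$, so that $e^{-r^ps}=e^{-t^p}$; this produces a factor $s^{\alpha/p}$ and turns the direction $u$ into the vector $x = s^{-1/p}u$. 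Since $x = (su)/|su|^{1+1/p}$ and $s^{\alpha/p}=|su|^{\alpha/p}$, the $(s,u)$-integral against $Q_u\,\sigma$ becomes an integral against $Q$ via \eqref{eq: Q} and then against $R$ via the image-measure definition \eqref{eq:R}, giving \eqref{eq:levy m}. The equivalent form \eqref{eq:levy m2} then follows from the further substitution $\tau = t^p$.

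For part~2, recall that $M$ (having no Gaussian part) is a Lévy measure exactly when it satisfies \eqref{eq: cond for levy measure}. Using \eqref{eq:levy m} one finds $M(\{0\})= R(\{0\})\int_0^\infty t^{-1-\alpha}e^{-t^p}\,\rd t$, which forces $R(\{0\})=0$, and
\begin{equation*}
\int_{\mathbb R^d}(|y|^2\wedge1)M(\rd y) = \int_{\mathbb R^d}I(|x|)\,R(\rd x),\qquad I(\rho) := \int_0^\infty(\rho^2t^2\wedge1)\,t^{-1-\alpha}e^{-t^p}\,\rd t .
\end{equation*}
The main obstacle is the two-sided estimation of $I(\rho)$. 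Splitting the integral at $t=1/\rho$, I would show $I(\rho)\asymp\rho^2$ as $\rho\to0$ (the relevant integral converging precisely because $\alpha<2$), while as $\rho\to\infty$ one has $I(\rho)\asymp\rho^\alpha$ for $\alpha\in(0,2)$, $I(\rho)\asymp 1+\log^+\rho$ for $\alpha=0$, and $I(\rho)\asymp1$ for $\alpha<0$. Feeding these comparabilities into $\int I(|x|)\,R(\rd x)$ reproduces exactly \eqref{eq: integ cond on R}; the degenerate case $R=0$ gives $M=0$ and is handled separately, and when the conditions hold $M$ has the form \eqref{eq:m} so it is indeed the Lévy measure of an element of $\ts$.

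To prove the uniqueness assertion in part~2, it suffices, by the explicit bijection between $R$ and $Q$ given by \eqref{eq:R}--\eqref{eq: Q in terms of R}, to show that $M$ determines $Q$. Rewriting the representation in terms of $Q$ and testing against functions of the form $y\mapsto h(y/|y|)g(|y|)$, the radial contribution is $\int_0^\infty g(r)e^{-r^p s}r^{-\alpha-1}\,\rd r$; the substitution $\tau=r^p$ exhibits this as a Laplace transform in the radial variable $s=|w|$. Injectivity of the Laplace transform, together with the fact that the functions $h$ separate directions, shows that the radial images of $Q$ are determined by $M$, hence $Q$, and therefore $R$, is unique.

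Finally, parts~3 and~4 both rest on the identity $\int_{\mathbb R^d}|x|^\alpha R(\rd x)=Q(\mathbb R^d)$, obtained by taking $A=\mathbb R^d$ in \eqref{eq: Q in terms of R}. By Remark~\ref{remark: Bern}, $\mu$ is proper iff the $Q_u$ may be chosen to be probability measures; since $\sigma$ is finite, this is possible iff the angular marginal of $Q$ is finite, i.e.\ iff $Q(\mathbb R^d)<\infty$, which by the identity is precisely \eqref{eq: cond for proper}. When this holds, the proper $\sigma$ is the angular marginal of $Q$, namely $\sigma(B)=\int_{\mathbb R^d}1_B(w/|w|)Q(\rd w)$; because the inversion map $x\mapsto x/|x|^{p+1}$ preserves direction (so $w/|w|=x/|x|$), a further application of \eqref{eq: Q in terms of R} converts this into the claimed formula.
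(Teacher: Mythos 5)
Your proposal is correct, and it takes essentially the same route as the paper: for the one piece the paper actually proves in detail (the equivalence of $\int(|x|^2\wedge 1)M(\rd x)<\infty$ with \eqref{eq: integ cond on R}), your splitting of $I(\rho)$ at $t=1/\rho$ and the resulting two-sided asymptotics at $0$ and $\infty$ are the same computation the paper performs via explicit upper and lower bounds, including the observation that $\alpha<2$ is forced whenever $R\ne 0$. The remaining parts are omitted by the paper as "similar to Rosi\'nski (2007)," and your sketches — Tonelli plus the substitution $r=s^{-1/p}t$ in the Bernstein representation \eqref{eq:Q(|)} for Part 1, Laplace-transform injectivity for the uniqueness of $R$, and the identification of \eqref{eq: cond for proper} with $Q(\mathbb R^d)<\infty$ for Parts 3 and 4 — follow exactly that standard argument, with the only thin spot being the (citable) polar-decomposition/disintegration step needed to pass between $Q$ and the pair $(\sigma,\{Q_u\})$.
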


Note that for all $\alpha<2$ the conditions in \eqref{eq: integ cond on R} imply the necessity of
\begin{eqnarray}
\int_{\mathbb R^d}\left(|x|^2\wedge|x|^\alpha\right)R(\rd x)<\infty\ \mbox{and}\ \int_{\mathbb R^d}\left(|x|^2\wedge1\right)R(\rd x)<\infty.
\end{eqnarray}
Before proving Theorem \ref{thrm:cond on R}, we will translate the integrability conditions on $R$ into integrability conditions on $\{Q_u\}_{u\in\mathbb S^{d-1}}$ and $\sigma$.

\begin{corollary}\label{corr:cond on Q(|)}
Fix $p>0$, let $M$ be given by \eqref{eq:m}, and let $\{Q_u\}$ be as in \eqref{eq:Q(|)}. $M$ is a L\'evy measure if and only if either
$$
Q_u(\mathbb R_+) = 0 \ \sigma\mbox{-a.e.}
$$
or $\alpha<2$ and
\begin{eqnarray}
\int_{\mathbb S^{d-1}}\int_0^\infty\left(t^{-(2-\alpha)/p}\wedge1\right)Q_u(\rd t)\sigma(\rd u)<\infty & \alpha\in(0,2)\nonumber\\
\int_{\mathbb S^{d-1}}\int_0^\infty\left(t^{-2/p}\wedge\left[1+\log^+(t^{-1/p})\right]\right)Q_u(\rd t)\sigma(\rd u)<\infty & \alpha=0\nonumber\\
\int_{\mathbb S^{d-1}}\int_0^\infty\left(t^{-(2-\alpha)/p} \wedge t^{\alpha/p} \right)Q_u(\rd t)\sigma(\rd u)<\infty & \alpha<0\nonumber.
\end{eqnarray}
\end{corollary}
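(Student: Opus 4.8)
The plan is to deduce the corollary directly from Theorem~\ref{thrm:cond on R} by transporting the integrability conditions \eqref{eq: integ cond on R} on $R$ across the change of variables that links $R$ to $\{Q_u\}$ and $\sigma$. The one computation that carries all the content is a transfer formula: inserting the definition \eqref{eq: Q} of $Q$ into \eqref{eq:R} and using $|ru|=r$ together with $\frac{ru}{r^{1+1/p}}=r^{-1/p}u$, I obtain, for every nonnegative Borel function $g$ on $\mathbb R^d$,
\begin{eqnarray*}
\int_{\mathbb R^d} g(y)\,R(\rd y) = \int_{\mathbb S^{d-1}}\int_{(0,\infty)} g\left(r^{-1/p}u\right) r^{\alpha/p}\,Q_u(\rd r)\,\sigma(\rd u).
\end{eqnarray*}
I would establish this first for indicators (where it is just the definition of $R$) and extend to general nonnegative $g$ by linearity and monotone convergence.

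Next I would specialize $g$ to the three radial integrands appearing in \eqref{eq: integ cond on R}, each of the form $g(y)=h(|y|)$, so that the transfer formula reduces to $\int h(|y|)\,R(\rd y)=\int_{\mathbb S^{d-1}}\int_{(0,\infty)} h(r^{-1/p})\,r^{\alpha/p}\,Q_u(\rd r)\,\sigma(\rd u)$. The heart of the matter is then to distribute the factor $r^{\alpha/p}$ across the minimum in $h$. For $\alpha\in(0,2)$, with $h(s)=s^2\wedge s^\alpha$ one has $h(r^{-1/p})r^{\alpha/p}=(r^{-2/p}\wedge r^{-\alpha/p})r^{\alpha/p}=r^{-(2-\alpha)/p}\wedge1$; for $\alpha<0$, with $h(s)=s^2\wedge1$ one gets $(r^{-2/p}\wedge1)r^{\alpha/p}=r^{-(2-\alpha)/p}\wedge r^{\alpha/p}$; and for $\alpha=0$, with $h(s)=s^2\wedge[1+\log^+s]$ and $r^{\alpha/p}=1$ one gets $r^{-2/p}\wedge[1+\log^+(r^{-1/p})]$. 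After renaming $r$ as $t$, each of these is exactly the corresponding integrand in the statement of the corollary, so \eqref{eq: integ cond on R} holds if and only if the stated condition on $\{Q_u\}$ and $\sigma$ holds.

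It then remains to match the two degenerate alternatives and to verify the auxiliary requirement in \eqref{eq: R(0)=0}. Choosing $g$ strictly positive in the transfer formula shows that $R=0$ if and only if $\int_{(0,\infty)}Q_u(\rd r)=0$ for $\sigma$-a.e.\ $u$, i.e.\ $Q_u(\mathbb R_+)=0$ $\sigma$-a.e.; and since $r\mapsto r^{-1/p}u$ never equals the origin for $r\in(0,\infty)$, taking $g=1_{\{0\}}$ gives $R(\{0\})=0$ automatically. With these identifications, Theorem~\ref{thrm:cond on R}(2)—which states that $M$ is a L\'evy measure precisely when $R=0$, or else $\alpha<2$, $R(\{0\})=0$, and \eqref{eq: integ cond on R} hold—immediately yields the corollary.

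I do not expect a genuine obstacle: the argument is essentially bookkeeping built on a single substitution. The only place requiring care is the algebraic simplification of $(\,\cdot\wedge\cdot\,)\,r^{\alpha/p}$, where one must note that distributing the positive factor $r^{\alpha/p}$ across the wedge is legitimate and track the sign of $\alpha$ so that the logarithmic term surfaces only in the borderline case $\alpha=0$.
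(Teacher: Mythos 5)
Your proposal is correct and matches the paper's (implicit) argument: the corollary is presented there as a direct translation of the conditions in Theorem \ref{thrm:cond on R} via the correspondence \eqref{eq: Q}--\eqref{eq:R}, which is exactly the change-of-variables $x=ru\mapsto r^{-1/p}u$ with weight $r^{\alpha/p}$ that you carry out. Your case-by-case simplification of the wedge terms and the identification of the degenerate alternative $R=0\Leftrightarrow Q_u(\mathbb R_+)=0$ $\sigma$-a.e.\ are both right.
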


Note that these conditions guarantee that for any $p>0$ and for $\sigma$-a.e. $u$ \eqref{eq: q goes to zero} holds and $\int_{\mathbb R^d} e^{-r^ps}Q_u(\rd s)<\infty$.

\begin{proof}[Proof of Theorem \ref{thrm:cond on R}]
We omit most parts of the proof because they are similar to the case when $p=1$ and $\alpha\in(0,2)$, which is given in \cite{Rosinski:2007}. We only show that when $M$ is given by \eqref{eq:levy m} it is a L\'evy measure if and only if \eqref{eq: R(0)=0} and \eqref{eq: integ cond on R} hold. Assume $R\ne0$, since the other case is trivial. We have
$$
M(\{0\}) =\int_{\mathbb{R}^d}\int_0^\infty1_{\{0\}}(tx)t^{-\alpha-1}e^{-t^p}\rd tR(\rd x) = \int_{\{0\}}\int_0^\infty t^{-1-\alpha}e^{-t^p}\rd tR(\rd x),
$$
which equals zero if and only if $R(\{0\})=0$.

Now assume $\int(|x|^2\wedge1)M(\rd x)<\infty$. For any $\epsilon>0$
\begin{eqnarray}
\infty &>& \int_{|x|\le1}|x|^2M(\rd x) =
\int_{\mathbb R^d}|x|^2\int_0^{|x|^{-1}}t^{1-\alpha}e^{-t^p}\rd tR(\rd x)\nonumber\\
&\ge& \int_{|x|\le1/\epsilon}|x|^2\int_0^\epsilon t^{1-\alpha}e^{-t^p}\rd tR(\rd x) \ge
e^{-\epsilon^p}\int_{|x|\le1/\epsilon}|x|^2 \int_0^\epsilon t^{1-\alpha}\rd tR(\rd x)\nonumber.
\end{eqnarray}
Since $R\ne0$, for this to be finite for all $\epsilon>0$ it is necessary that $\alpha<2$. Taking $\epsilon=1$ gives the necessity of $\int_{|x|\le1}|x|^2R(\rd x)<\infty$. Observe that
\begin{eqnarray}
\infty &>& \int_{|x|\ge1}M(\rd x) = \int_{\mathbb{R}^d}\int_{|x|^{-1}}^\infty t^{-1-\alpha}e^{-t^p}\rd tR(\rd x)\nonumber\\
&\ge& \int_{|x|\ge1}\int_{|x|^{-1}}^\infty t^{-1-\alpha}e^{-t^p}\rd tR(\rd x)\nonumber\\
&\ge& \int_1^\infty t^{-1-\alpha}e^{-t^p}\rd t\int_{|x|\ge1}R(\rd x) + e^{-1}\int_{|x|\ge1}\int_{|x|^{-1}}^1 t^{-1-\alpha}\rd tR(\rd x).\nonumber
\end{eqnarray}
This implies the necessity of $\int_{|x|\ge1}R(\rd x)<\infty$ and $\int_{|x|\ge1}\int_{|x|^{-1}}^1
t^{-1-\alpha}\rd t R(\rd x)<\infty$. When $\alpha<0$ we are done. When $\alpha=0$ this implies the finiteness
of $\int_{|x|\ge1}\log|x|R(\rd x)$, and when $\alpha\in(0,2)$ it implies the finiteness of
$\int_{|x|\ge1}|x|^\alpha R(\rd x)$.
Thus \eqref{eq: R(0)=0} and \eqref {eq: integ cond on R} hold.

Now assume that \eqref{eq: R(0)=0} and \eqref{eq: integ cond on R} hold. We have
\begin{eqnarray*}
&& \int_{|x|\le1}|x|^2M(\rd x) = \int_{\mathbb{R}^d}|x|^2\int_0^{|x|^{-1}}t^{1-\alpha}e^{-t^p}\rd t R(\rd x)\\
&&\quad \le \int_{|x|\le1}|x|^2R(\rd x)\int_0^\infty t^{1-\alpha}e^{-t^p}\rd t + \int_{|x|>1}|x|^2\int_0^{|x|^{-1}}
t^{1-\alpha}\rd tR(\rd x)\\
&&\quad = p^{-1}\Gamma\left(\frac{2-\alpha}{p}\right)\int_{|x|\le1}|x|^2R(\rd x) +(2-\alpha)^{-1}\int_{|x|>1}|x|^{\alpha}R(\rd x)<\infty.
\end{eqnarray*}
Let $D=\sup_{t\ge1}t^{2-\alpha}e^{-t^p}$. We have
\begin{eqnarray*}
&&\int_{|x|\ge1}M(\mathrm \rd x) = \int_{\mathbb{R}^d}\int_{|x|^{-1}}^\infty
t^{-1-\alpha}e^{-t^p}\rd t R(\rd x)\\
&&\quad \le D\int_{|x|\le1}\int_{|x|^{-1}}^\infty t^{-3}\rd t R(\rd x) +
\int_{|x|>1}\int_{|x|^{-1}}^\infty t^{-1-\alpha}e^{-t^p}\rd t R(\rd x).
\end{eqnarray*}
The first integral in the above equals
$.5D\int_{|x|\le1}|x|^2R(\rd x)$, which is assumed finite. The
second integral can be written as
\begin{eqnarray*}
\int_{|x|>1}\int_{|x|^{-1}}^1 t^{-1-\alpha}e^{-t^p}\rd t R(\rd x) +
\int_{1}^\infty t^{-1-\alpha}e^{-t^p}\rd t\int_{|x|>1}R(\rd x)
\end{eqnarray*}
Of these, the second integral is finite since $\int_{|x|>1}R(\rd x)<\infty$. The first is bounded by
$\int_{|x|>1}\frac{|x|^\alpha-1}{\alpha}R(\rd x)$ when $\alpha\ne0$ and by $\int_{|x|>1}\log|x|R(\rd x)$, when
$\alpha=0$. The fact that both of these are assumed to be finite gives the result.
\end{proof}

\begin{definition}
The unique measure in \eqref{eq:R} is called the \textbf{Rosi\'nski measure} of the corresponding $p$-tempered $\alpha$-stable distribution.
\end{definition}

\begin{remark}
For $\alpha\in(0,2)$ and $p=1$ the Rosi\'nski measure was called the spectral measure in
\cite{Rosinski:2007}. For $\alpha\in[0,2)$ and $p=2$ the Rosi\'nski measure was introduced in a slightly
different parametrization in \cite{Bianchi:Rachev:Kim:Fabozzi:2011}.
\end{remark}

\begin{remark}
Fix $\alpha<2$, $p>0$, and let $\mu\in \ts$ with Rosi\'nski measure $R$. Then $\mu=ID(0,M,b)$ for some $b\in\mathbb R^d$ and $M$ uniquely determined by $R$. We write $\ts(R,b)$ to denote this distribution.
\end{remark}

Theorem \ref{thrm:cond on R} shows that for a fixed $p>0$ and $\alpha<2$, the Rosi\'nski measure is uniquely determined by the L\'evy measure. This leaves the question of whether all of the parameters are jointly identifiable. Unfortunately this is not the case. As we will show below, even for a fixed $p>0$ the parameters $\alpha$ and $R$ are not jointly identifiable. However, using ideas similar to those in \cite{Rosinski:2007}, we will show that for a fixed $p>0$, in the subclass of proper tempered stable distribution, they are jointly identifiable. On the other hand, for a fixed $\alpha<2$, even in the subclass of proper tempered stable distributions, the parameters $p$ and $R$ are not jointly identifiable. We begin with the following lemma.

\begin{lemma}\label{lemma: for ident}
Fix $\alpha<2$, $p>0$, and let $M$ be the L\'evy measure of a $p$-tempered $\alpha$-stable distribution with Rosi\'nski measure $R\ne0$.\\
1. The map $s\mapsto s^\alpha M(|x|>s)$ is decreasing and $\lim_{s\rightarrow\infty}s^\alpha M(|x|>s)=0$.\\
2. If $\alpha\in(0,2)$ then
$$
\lim_{s\downarrow 0}s^{\alpha}M(|x|>s) = \frac{1}{\alpha}\int_{\mathbb{R}^d}|x|^\alpha R(\mathrm \rd x)
$$
and if $\alpha\le0$ then
$$
\lim_{s\downarrow0}s^{\alpha}M(|x|>s) = \infty.
$$
3. If $\alpha<0$ then
$$
\lim_{s\downarrow0}s^{\alpha}M(|x|<s) = \frac{1}{|\alpha|}\int_{\mathbb{R}^d}|x|^\alpha R(\rd x)
$$
and if $\alpha\in[0,2)$ then for all $s>0$
$$
M(|x|<s) = \infty.
$$
\end{lemma}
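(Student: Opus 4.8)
The plan is to work directly from the mixture representation \eqref{eq:levy m}, since it expresses $M$ as an integral over $R$ of an explicit radial profile. Writing the event $\{|x|>s\}$ in terms of the radial variable $t$, for $x\ne0$ the condition $|tx|>s$ is just $t>s/|x|$, so
\[
M(|x|>s) = \int_{\mathbb{R}^d}\int_{s/|x|}^\infty t^{-1-\alpha}e^{-t^p}\,\rd t\, R(\rd x).
\]
The crucial step is the substitution $u=t|x|/s$ in the inner integral, which isolates the power $s^{-\alpha}$ and produces the clean formula
\[
s^\alpha M(|x|>s) = \int_{\mathbb{R}^d}|x|^\alpha\int_1^\infty u^{-1-\alpha}e^{-(su/|x|)^p}\,\rd u\, R(\rd x).
\]
Everything in Parts 1 and 2 then follows by inspecting this single expression.

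For Part 1, I would note that for fixed $u,x$ the factor $e^{-(su/|x|)^p}$ is decreasing in $s$, hence so is the integrand and so is the whole integral; this gives the monotonicity. For the limit at infinity, the integrand at $s=1$ dominates the integrand for all $s\ge1$ and integrates to $M(|x|>1)$, which is finite precisely because $M$ is a L\'evy measure; dominated convergence then lets me pass to the limit inside, where $e^{-(su/|x|)^p}\to0$, giving $0$. For Part 2, letting $s\downarrow0$ makes $e^{-(su/|x|)^p}\uparrow1$, so monotone convergence gives $\lim_{s\downarrow0}s^\alpha M(|x|>s)=\int_{\mathbb{R}^d}|x|^\alpha R(\rd x)\int_1^\infty u^{-1-\alpha}\,\rd u$. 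When $\alpha\in(0,2)$ the $u$-integral equals $1/\alpha$, yielding the stated formula (with both sides possibly $+\infty$), whereas when $\alpha\le0$ the $u$-integral diverges, and since $R\ne0$ with $R(\{0\})=0$ the limit is $+\infty$.

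For Part 3, I would carry out the analogous computation with $\{|x|<s\}$, i.e.\ $0<t<s/|x|$, and the same substitution, obtaining
\[
s^\alpha M(|x|<s) = \int_{\mathbb{R}^d}|x|^\alpha\int_0^1 u^{-1-\alpha}e^{-(su/|x|)^p}\,\rd u\, R(\rd x).
\]
When $\alpha\in[0,2)$ I would not even need the substitution: the inner integral $\int_0^{s/|x|}t^{-1-\alpha}e^{-t^p}\,\rd t$ already diverges at $t=0$ for every $x\ne0$, since $t^{-1-\alpha}$ is non-integrable near the origin there, so $M(|x|<s)=\infty$ directly. When $\alpha<0$, monotone convergence as $s\downarrow0$ together with $\int_0^1 u^{-1-\alpha}\,\rd u=1/|\alpha|$ gives the stated limit.

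The computations are routine once the substitution is in place; the main thing to get right is the bookkeeping of which $u$-integral ($\int_1^\infty$ versus $\int_0^1$) converges or diverges as a function of the sign of $\alpha$, and, for the conclusions equal to $+\infty$, invoking $R\ne0$ together with $R(\{0\})=0$ so that a set of positive $R$-measure carries the divergent inner integral. The only place that genuinely requires a dominated rather than a monotone convergence argument is the limit at infinity in Part 1, where finiteness of the dominating integrand is exactly the statement that $M$ is a L\'evy measure.
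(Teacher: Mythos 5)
Your proof is correct and follows essentially the same route as the paper: the same change of variables in the radial integral to isolate $s^{-\alpha}$ (the paper normalizes the lower limit to $|x|^{-1}$ rather than to $1$, an immaterial difference), followed by the identical monotonicity observation, dominated convergence at infinity with the $s=1$ integrand as dominating function, and monotone convergence at zero. No gaps.
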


This lemma extends Corollary 2.5 in \cite{Rosinski:2007}. Note that it implies that in the subclass of proper tempered stable distributions both $\lim_{s\downarrow0}s^{\alpha}M(|x|>s)=\infty$ and $M(|x|<s) = \infty$ if and only if $\alpha=0$.

\begin{proof}
We begin with the first part. Since
\begin{eqnarray}
s^\alpha M(|x|>s) &=& s^\alpha\int_{\mathbb R^d}\int_{s|x|^{-1}}^\infty t^{-1-\alpha}e^{-t^p}\rd tR(\rd x)\nonumber\\
&=& \int_{\mathbb{R}^d}\int_{|x|^{-1}}^\infty t^{-1-\alpha}e^{-(st)^p}\rd t R(\rd x),\label{eq: for levy
tails}
\end{eqnarray}
the map $s\mapsto s^\alpha M(|x|>s)$ is decreasing. For large enough $s$, the integrand in \eqref{eq: for levy tails} is bounded by $t^{-1-\alpha}e^{-t^p}$, which is integrable. Thus by dominated convergence $\lim_{s\rightarrow\infty}s^\alpha M(|x|>s)=0$.

For the second part, by \eqref{eq: for levy tails} and the Monotone Convergence Theorem
\begin{eqnarray*}
\lim_{s\downarrow0}s^{\alpha}M(|x|>s) &=& \int_{\mathbb R^d}\int_{|x|^{-1}}^\infty t^{-1-\alpha}\rd t R(\rd x).
\end{eqnarray*}
Thus if $\alpha\in(0,2)$ then
$$
\lim_{s\downarrow0}s^{\alpha}M(|x|>s) = \frac{1}{\alpha}\int_{\mathbb R^d}|x|^\alpha R(\rd x),
$$
and if $\alpha\le0$ then
$$
\lim_{s\downarrow0}s^{\alpha}M(|x|>s) = \infty.
$$

Now for the third part. If $\alpha\in[0,2)$ then for all $s>0$
\begin{eqnarray*}
M(|x|<s) &=& \int_{\mathbb R^d}\int_0^{s|x|^{-1}}t^{-1-\alpha}e^{-t^p}\rd tR(\rd x)\\
&\ge& \int_{\mathbb R^d}e^{-(s/|x|)^p}\int_0^{s|x|^{-1}}t^{-1-\alpha}\rd tR(\rd x) = \infty.
\end{eqnarray*}
If $\alpha<0$ then
\begin{eqnarray*}
\lim_{s\downarrow0}s^\alpha M(|x|<s) &=& \lim_{s\downarrow0}s^\alpha \int_{\mathbb
R^d}\int_0^{s|x|^{-1}}t^{-1-\alpha}e^{-t^p}\rd tR(\rd x)\\
&=& \lim_{s\downarrow0}\int_{\mathbb R^d}\int_0^{|x|^{-1}} t^{-1-\alpha}e^{-(st)^p}\rd t R(\rd x)\\
&=& \int_{\mathbb{R}^d}\int_0^{|x|^{-1}} t^{-1-\alpha}\rd t R(\rd x)
=\frac{1}{|\alpha|}\int_{\mathbb{R}^d}|x|^\alpha R(\rd x),
\end{eqnarray*}
where the third line follows by the Monotone Convergence Theorem.
\end{proof}

Combining Lemma \ref{lemma: for ident} with \eqref{eq: cond for proper} gives the following.

\begin{proposition}
In the subclass of proper tempered stable distributions with parameter $p>0$ fixed, the parameters $R$ and $\alpha$ are jointly identifiable.
\end{proposition}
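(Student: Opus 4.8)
The plan is to reduce everything to identifying the index $\alpha$ from the L\'evy measure $M$. Indeed, once we know that $\alpha$ is determined by $M$, Theorem \ref{thrm:cond on R} tells us that for that fixed $\alpha$ (and the fixed $p$) the Rosi\'nski measure is uniquely recovered from $M$, so $R$ is determined as well and the pair $(\alpha,R)$ is identifiable. Thus suppose $M$ is simultaneously the L\'evy measure of a proper $p$-tempered $\alpha_1$-stable distribution with Rosi\'nski measure $R_1$ and of a proper $p$-tempered $\alpha_2$-stable distribution with Rosi\'nski measure $R_2$. Restricting to nondegenerate distributions we have $R_1,R_2\ne0$, and without loss of generality $\alpha_1\le\alpha_2$; the goal is to show $\alpha_1=\alpha_2$.

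First I would read off the sign of the index from $M$ alone. By part 3 of Lemma \ref{lemma: for ident}, $M(|x|<s)=\infty$ for every $s>0$ when the index lies in $[0,2)$, whereas when the index is negative $s^{\alpha}M(|x|<s)$ has a finite limit and hence $M(|x|<s)<\infty$ for small $s$. Since the quantity $M(|x|<s)$ depends only on $M$, the two representations must fall in the same regime: either $\alpha_1,\alpha_2\in[0,2)$ or $\alpha_1,\alpha_2<0$.

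The generic regimes are then handled by a scaling argument resting on the exact limits in parts 2 and 3, where properness \eqref{eq: cond for proper} together with $R_i\ne0$ is exactly what guarantees that the limiting constants $\tfrac{1}{\alpha_i}\int_{\mathbb R^d}|x|^{\alpha_i}R_i(\rd x)$ are finite and strictly positive. If $\alpha_1,\alpha_2<0$, part 3 gives $s^{\alpha_i}M(|x|<s)\to c_i\in(0,\infty)$; factoring $s^{\alpha_1}M(|x|<s)=s^{\alpha_1-\alpha_2}\,s^{\alpha_2}M(|x|<s)$ and letting $s\downarrow0$, the prefactor $s^{\alpha_1-\alpha_2}$ blows up unless $\alpha_1=\alpha_2$, which would force the left-hand limit to be infinite and contradict $c_1<\infty$. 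The case $\alpha_1,\alpha_2\in(0,2)$ is entirely symmetric, using instead the finite positive limits of $s^{\alpha_i}M(|x|>s)$ from part 2.

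The hard part is the boundary case $\alpha_1=0<\alpha_2<2$: it passes the sign test (both give $M(|x|<s)=\infty$) and is not excluded by the limits of $M(|x|>s)$, since both representations merely yield $M(|x|>s)\to\infty$ as $s\downarrow0$, so the two are separated only by their rate of growth. To close it I would go back to the representation \eqref{eq:levy m} for the index-$0$ measure, write $M(|x|>s)=\int_{\mathbb R^d}\int_{s/|x|}^\infty t^{-1}e^{-t^p}\rd t\,R_1(\rd x)$, and bound the inner integral by $\log^+(|x|/s)$ plus a constant. Using that $M$ is a L\'evy measure, so that \eqref{eq: integ cond on R} gives $\int_{|x|>1}\log|x|\,R_1(\rd x)<\infty$, together with properness $R_1(\mathbb R^d)<\infty$, this shows $M(|x|>s)$ grows only logarithmically, whence $s^\beta M(|x|>s)\to0$ for every $\beta>0$. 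Taking $\beta=\alpha_2$ contradicts the finite positive limit of $s^{\alpha_2}M(|x|>s)$ supplied by the index-$\alpha_2$ representation, and the proof is complete.
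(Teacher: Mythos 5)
Your proof is correct and follows the paper's intended route: the paper's entire argument is ``combine Lemma~\ref{lemma: for ident} with \eqref{eq: cond for proper}'', and your sign test plus the scaling arguments on $s^{\alpha}M(|x|<s)$ and $s^{\alpha}M(|x|>s)$ is exactly that combination spelled out. The one genuinely new ingredient you supply is the boundary case $\alpha_1=0<\alpha_2$, where the lemma alone only gives $M(|x|>s)\to\infty$ for both representations; your logarithmic bound $M(|x|>s)=O(\log(1/s))$, using $R_1(\mathbb R^d)<\infty$ and $\int_{|x|>1}\log|x|\,R_1(\rd x)<\infty$, correctly closes the gap the paper leaves implicit.
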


However, in general, the parameters $\alpha$ and $p$ are not identifiable. This will become apparent from the
following results.

\begin{proposition}\label{prop: change alphas}
Fix $\alpha<2$, $\beta\in(\alpha,2)$, and let $K=\int_0^\infty
s^{\beta-\alpha-1}e^{-s^p}\rd s$. If $\mu=TS^p_\beta(R,b)$ and
\begin{eqnarray}\label{eq: R for change alphas}
R'(A) = K^{-1}\int_{\mathbb R^d}\int_0^1 1_A(ux) u^{-\beta-1}\left(1-u^p\right)^{(\beta-\alpha)/p-1}\rd uR(\rd x)
\end{eqnarray}
then $R'$ is the Rosi\'nski measure of a $p$-tempered $\alpha$-stable distribution and
$\mu=TS^p_\alpha(R',b)$.
\end{proposition}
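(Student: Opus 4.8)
The plan is to reduce the whole statement to a single identity between Lévy measures. Since $\mu=TS^p_\beta(R,b)$ has no Gaussian part, it is $ID(0,M,b)$ with $M$ given by \eqref{eq:levy m} for the index $\beta$. The representation \eqref{eq: inf div char func} uses a \emph{fixed} truncation function, so rewriting the Lévy measure alters neither the drift $b$ nor the (vanishing) Gaussian part; hence it suffices to show that inserting $R'$ from \eqref{eq: R for change alphas} into \eqref{eq:levy m} with index $\alpha$ reproduces exactly this same $M$. Once that is done, the uniqueness statement in Theorem \ref{thrm:cond on R} finishes the argument: $M$ is a genuine Lévy measure and it admits the form \eqref{eq:levy m} with index $\alpha$ and measure $R'$, so the ``only if'' direction of part~2 forces $R'$ to satisfy \eqref{eq: R(0)=0} and \eqref{eq: integ cond on R} for $\alpha$, making $R'$ a legitimate Rosi\'nski measure and $\mu=\ts(R',b)$. (If $R=0$ then $R'=0$ and $M=0$, so we may assume $R\neq0$.)

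The substantive step is therefore the computation. I would substitute \eqref{eq: R for change alphas} into \eqref{eq:levy m} with index $\alpha$ and, since the integrand is nonnegative, apply Tonelli to pull the $R$-integral outside. Comparing the result with \eqref{eq:levy m} for index $\beta$, everything reduces to the scalar identity that, for each fixed $x$,
\[
K^{-1}\int_0^1\int_0^\infty 1_A(tux)\,t^{-1-\alpha}e^{-t^p}u^{-\beta-1}(1-u^p)^{(\beta-\alpha)/p-1}\,\rd t\,\rd u = \int_0^\infty 1_A(sx)\,s^{-1-\beta}e^{-s^p}\,\rd s.
\]
I would establish this by first substituting $s=tu$ in the inner ($t$) integral, which collapses the argument of $1_A$ to $sx$ and leaves, for fixed $s$, a $u$-integral of $u^{\alpha-\beta-1}e^{-(s/u)^p}(1-u^p)^{(\beta-\alpha)/p-1}$; this remaining integral I would evaluate via the successive substitutions $v=s/u$, $w=v^p$, and $y=w-s^p$.

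The hard part is precisely this last $u$-integration, because after $s=tu$ the tempering factor is the ``wrong'' $e^{-(s/u)^p}$ rather than the desired $e^{-s^p}$. The key point is that integrating the beta-type weight $(1-u^p)^{(\beta-\alpha)/p-1}$ against $e^{-(s/u)^p}$ over $u\in(0,1)$ — equivalently, over the larger scales $v=s/u\ge s$ — reconstitutes $e^{-s^p}$ exactly: the chain of substitutions turns that integral into $p^{-1}s^{\alpha-\beta}e^{-s^p}\int_0^\infty e^{-y}y^{(\beta-\alpha)/p-1}\,\rd y=p^{-1}s^{\alpha-\beta}e^{-s^p}\Gamma\!\left(\frac{\beta-\alpha}{p}\right)$. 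Multiplying by the leftover $s^{-1-\alpha}$ yields $p^{-1}\Gamma\!\left(\frac{\beta-\alpha}{p}\right)s^{-1-\beta}e^{-s^p}$, while a parallel substitution $w=s^p$ in the definition of $K$ gives $K=p^{-1}\Gamma\!\left(\frac{\beta-\alpha}{p}\right)$, so the constants cancel and the displayed identity holds. I would close by noting that the hypotheses $\alpha<\beta<2$ are exactly what make this legitimate: $\beta-\alpha>0$ guarantees convergence of $K$ and of the Gamma integral as well as integrability of the weight at $u=1$, and $\beta<2$ is what makes $M$ a $p$-tempered $\beta$-stable Lévy measure in the first place.
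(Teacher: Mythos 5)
Your proposal is correct, and the computational core --- substituting $R'$ into \eqref{eq:levy m}, applying Tonelli, and collapsing the double integral via $s=tu$ followed by a beta/gamma-type substitution to recover $K^{-1}p^{-1}\Gamma\left(\frac{\beta-\alpha}{p}\right)s^{-1-\beta}e^{-s^p}=s^{-1-\beta}e^{-s^p}$ --- is the same calculation the paper performs (there written as $v=ut$, an interchange of the $t$ and $v$ integrals, and $s^p=t^p-v^p$). Where you genuinely diverge is in how you certify that $R'$ is a legitimate Rosi\'nski measure. The paper spends the bulk of its proof verifying \eqref{eq: R(0)=0} and \eqref{eq: integ cond on R} for $R'$ by direct estimation, split into the cases $\alpha\in(0,2)$, $\alpha=0$, and $\alpha<0$ (with an auxiliary bound $\log u\le C_\epsilon u^\epsilon$ for the $\alpha=0$ case). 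You instead observe that, since everything is nonnegative, the identity $M'=M$ holds unconditionally as an identity of $[0,\infty]$-valued measures, and then invoke the necessity (``only if'') direction of Part 2 of Theorem \ref{thrm:cond on R}: $M$ is a Lévy measure admitting the representation \eqref{eq:levy m} with index $\alpha$ and measure $R'$, so $R'$ must satisfy the required conditions. This is a valid and appreciably shorter route --- it trades a page of case analysis for one appeal to an already-proved equivalence --- and your handling of the trivial case $R=0$ and of the invariance of the triplet $(0,M,b)$ under reparametrization of $M$ is also sound. The only thing the paper's longer route ``buys'' is that the direct estimates make explicit which moments of $R$ control which moments of $R'$, but that information is not needed for the proposition as stated.
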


\begin{proof}
First we will show that $R'$ is the Rosi\'nski measure of some $p$-tempered $\alpha$-stable distribution.  Let $C=\max_{u\in[0,.5]}\left(1-u^p\right)^{(\beta-\alpha)/p-1}$. We have
\begin{eqnarray*}
K\int_{|x|\le 1} |x|^2 R'(\rd x) &=& \int_{\mathbb R^d}|x|^2
\int_0^{1\wedge|x|^{-1}}u^{1-\beta}(1-u^p)^{(\beta-\alpha)/p-1}\rd u R(\rd x)\\
&\le& \int_{|x|\le2}|x|^2 R(\rd x)\int_0^1 u^{1-\beta}(1-u^p)^{(\beta-\alpha)/p-1}\rd u\\
&& \ \  + C \int_{|x|>2}|x|^2 \int_0^{|x|^{-1}} u^{1-\beta} \rd u R(\rd x)\\
&=& \int_{|x|\le2}|x|^2 R(\rd x)\int_0^1 u^{1-\beta}(1-u^p)^{(\beta-\alpha)/p-1}\rd u\\
&& \ \  + \frac{C}{2-\beta}\int_{|x|\ge2}|x|^\beta R(\rd x)<\infty.
\end{eqnarray*}
If $\alpha\in(0,2)$ then
\begin{eqnarray*}
K\int_{|x|>2} |x|^\alpha R'(\rd x) &=& \int_{|x|\ge2}|x|^\alpha \int_{|x|^{-1}}^{1/2} u^{\alpha-1-\beta}(1-u^p)^{(\beta-\alpha)/p-1}\rd u R(\rd x) \\
&& \ \ + \int_{|x|\ge2}|x|^\alpha \int_{1/2}^1 u^{\alpha-1-\beta}(1-u^p)^{(\beta-\alpha)/p-1}\rd u R(\rd x) \\
&\le& C \int_{|x|\ge2}|x|^\alpha \int_{|x|^{-1}}^\infty u^{\alpha-1-\beta}\rd u R(\rd x) \\
&& \ \ + \int_{|x|>2}|x|^\beta  R(\rd x)\int_{1/2}^1 u^{\alpha-1-\beta}(1-u^p)^{(\beta-\alpha)/p-1}\rd u.
\end{eqnarray*}
Here the first integral equals $\frac{C}{\beta-\alpha} \int_{|x|\ge2}|x|^\beta R(\rd x)<\infty$ and the second is also finite. Now assume $\alpha=0$ and fix $\epsilon\in(0,\beta)$. By 4.1.37 in \cite{Abramowitz:Stegun:1972} there exists a $C_\epsilon>0$ such that for all $u>0$, $\log u\le C_\epsilon u^\epsilon$. Thus
\begin{eqnarray*}
K\int_{|x|>2} \log|x| R'(\rd x) &\le&  KC_\epsilon \int_{|x|>2} |x|^\epsilon R'(\rd x),
\end{eqnarray*}
which is finite by arguments similar to the previous case. When $\alpha<0$
\begin{eqnarray*}
K\int_{|x|>2}R'(\rd x) &=& \int_{|x|>2}\int^1_{|x|^{-1}}u^{-1-\beta}(1-u^p)^{(\beta-\alpha)/p-1}\rd uR(\rd x)\\
&\le& C\int_{|x|>2}\int^{1}_{|x|^{-1}}u^{-1-\beta}\rd uR(\rd x)\\
&& \ \ +\int_{|x|>2}R(\rd x)\int^1_{1/2}u^{-1-\beta}(1-u^p)^{(\beta-\alpha)/p-1}\rd u.
\end{eqnarray*}
Here the second integral is finite. For $\beta\ne0$ the first equals $\frac{C}{\beta} \int_{|x|>2}\left(|x|^\beta-1\right)R(\rd x)$ which is finite, and for $\beta=0$ it equals
$\int_{|x|>2}\log|x|R(\rd x)<\infty$.

Now, let $M'$ be the L\'evy measure of $\ts(R',b)$. By \eqref{eq:levy m}
\begin{eqnarray*}
&& M'(A) =  K^{-1}\int_{\mathbb R^d}\int_0^\infty\int_0^1 1_A(utx)t^{-1-\alpha}e^{-t^p} u^{-\beta-1}\left(1-u^p\right)^{\frac{\beta-\alpha}{p}-1}\rd u\rd tR(\rd x)\\
&& \qquad = K^{-1}\int_{\mathbb R^d}\int_0^\infty\int_0^t 1_A(vx)t^{\beta-\alpha-1}e^{-t^p}
v^{-\beta-1}\left(1-v^p/t^p\right)^{\frac{\beta-\alpha}{p}-1}\rd v\rd tR(\rd x)\\
&& \qquad = K^{-1}\int_{\mathbb R^d}\int_0^\infty\int_v^\infty 1_A(vx)t^{p-1}e^{-t^p}
v^{-\beta-1}\left(t^p-v^p\right)^{\frac{\beta-\alpha}{p}-1}\rd t\rd vR(\rd x)\\
&& \qquad =  K^{-1}\int_{\mathbb R^d}\int_0^\infty 1_A(vx)e^{-v^p}v^{-\beta-1}\rd v R(\rd x) \int_0^\infty
e^{-s^p} s^{\beta-\alpha-1}\rd s\\
&& \qquad = \int_{\mathbb R^d}\int_0^\infty 1_A(vx)e^{-v^p}v^{-\beta-1}\rd vR(\rd x),
\end{eqnarray*}
where the second line follows by the substitution $v=ut$ and the fourth by the substitution $s^p = t^p-v^p$.
\end{proof}

To show a similar result for the parameter $p$ we need some additional notation. For $r\in(0,1)$, let $f_r$ be the density of the
$r$-stable distribution with
\begin{eqnarray}
\int_0^\infty e^{-tx} f_r(x)\rd x = e^{-t^r}.
\end{eqnarray}
Such a density exists by Proposition 1.2.12 in \cite{Samorodnitsky:Taqqu:1994}. However, the only case where an
explicit formula is known is
\begin{eqnarray}
f_{.5}(s) = (2\sqrt\pi)^{-1}e^{-1/(4s)}s^{-3/2}1_{[s>0]}
\end{eqnarray}
(see Examples 2.13 and 8.11 in \cite{Sato:1999}). From Theorem 5.4.1 in \cite{Uchaikin:Zolotarev:1999} it follows that if $X\sim f_r$ and $\beta
\ge 0$ then
\begin{eqnarray}\label{eq: neg moments of stable}
E|X|^{-\beta}<\infty.
\end{eqnarray}

\begin{proposition}\label{prop: change ps}
Fix $\alpha<2$, $0<p<q$. If $\mu = TS^p_\alpha(R,b)$ and
\begin{eqnarray}
R'(A) = \int_{\mathbb R^d}\int_0^\infty 1_A(s^{-1/q}x) s^{\alpha/q}f_{p/q}(s)\rd s R(\rd x)
\end{eqnarray}
then $R'$ is the Rosi\'nski measure of a $q$-tempered $\alpha$-stable distribution and $\mu = TS^q_\alpha(R',b)$. Moreover, $\mu$ is a proper $p$-tempered $\alpha$-stable distribution if and only if it is a proper $q$-tempered $\alpha$-stable distribution.
\end{proposition}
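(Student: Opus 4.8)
The plan is to follow the template of the proof of Proposition \ref{prop: change alphas}: show by a direct change of variables that the L\'evy measure obtained from $R'$ under the $q$-tempering representation \eqref{eq:levy m} coincides with the L\'evy measure $M$ of $\mu=TS^p_\alpha(R,b)$, and then read off the remaining claims from Theorem \ref{thrm:cond on R}. The conceptual engine is the subordination identity
$$
\int_0^\infty e^{-v^q s}f_{p/q}(s)\,\rd s = e^{-(v^q)^{p/q}} = e^{-v^p},
$$
which is precisely the defining Laplace transform of $f_{p/q}$; note $f_{p/q}$ exists because $0<p<q$ forces $p/q\in(0,1)$. Informally, $p$-tempering is recovered from $q$-tempering by mixing the scale of the tempering variable against the $(p/q)$-stable law.

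First I would carry out the main computation. Let $M'$ denote the measure defined by \eqref{eq:levy m} with $q$ and $R'$ in place of $p$ and $R$; since all integrands are nonnegative, every interchange below is justified by Tonelli. Substituting the definition of $R'$ gives
$$
M'(A) = \int_{\mathbb R^d}\int_0^\infty\int_0^\infty 1_A\!\left(t s^{-1/q}x\right)t^{-1-\alpha}e^{-t^q}s^{\alpha/q}f_{p/q}(s)\,\rd s\,\rd t\,R(\rd x).
$$
Then I would apply the substitution $v=s^{-1/q}t$ (so $t=v s^{1/q}$, $\rd t = s^{1/q}\rd v$, $t^q=v^q s$) and track the powers of $s$: the factor $t^{-1-\alpha}$ contributes $s^{-(1+\alpha)/q}$, which together with $s^{\alpha/q}$ from $R'$ and $s^{1/q}$ from the Jacobian yields $s^{(-(1+\alpha)+\alpha+1)/q}=1$. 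Hence
$$
M'(A) = \int_{\mathbb R^d}\int_0^\infty 1_A(vx)\,v^{-1-\alpha}\left(\int_0^\infty e^{-v^q s}f_{p/q}(s)\,\rd s\right)\rd v\,R(\rd x),
$$
and the inner integral equals $e^{-v^p}$ by the identity above, so $M'=M$ by \eqref{eq:levy m}.

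Next I would deduce that $R'$ is an admissible Rosi\'nski measure and that $\mu=TS^q_\alpha(R',b)$. If $R=0$ then $R'=0$ and all claims are trivial; otherwise one first checks $R'(\{0\})=0$, since $1_{\{0\}}(s^{-1/q}x)$ forces $x=0$ and $R(\{0\})=0$. Because $M'$, as defined by \eqref{eq:levy m} from $R'$ with exponent $q$, equals the genuine L\'evy measure $M$, the ``only if'' direction of part 2 of Theorem \ref{thrm:cond on R} forces $R'$ to satisfy \eqref{eq: R(0)=0} and \eqref{eq: integ cond on R} for the exponent $q$. Thus $R'$ is the Rosi\'nski measure of a $q$-tempered $\alpha$-stable distribution, and since the shift and the absence of a Gaussian part are unchanged, $\mu=TS^q_\alpha(R',b)$. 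One could instead verify \eqref{eq: integ cond on R} directly, case by case in $\alpha$ as in Proposition \ref{prop: change alphas}, but routing through $M'=M$ avoids that case analysis.

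Finally, for the properness statement I would invoke part 3 of Theorem \ref{thrm:cond on R} and compare the $\alpha$-moments of $R$ and $R'$. Using $|s^{-1/q}x|^\alpha = s^{-\alpha/q}|x|^\alpha$ the powers of $s$ again cancel, and since $f_{p/q}$ is a probability density,
$$
\int_{\mathbb R^d}|x|^\alpha R'(\rd x) = \int_{\mathbb R^d}|x|^\alpha R(\rd x)\int_0^\infty f_{p/q}(s)\,\rd s = \int_{\mathbb R^d}|x|^\alpha R(\rd x),
$$
so \eqref{eq: cond for proper} holds for $R'$ if and only if it holds for $R$, which is the asserted equivalence. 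The main (and essentially only) obstacle is the bookkeeping in the second paragraph: performing the change of variables correctly, confirming that the three powers of $s$ cancel exactly, and checking that the argument of the Laplace transform of $f_{p/q}$ is $v^q$ so that the exponent collapses to $v^p$. Everything else follows softly from the characterization and uniqueness already established in Theorem \ref{thrm:cond on R}.
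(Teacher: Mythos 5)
Your proposal is correct, and its core — the change of variables $v=s^{-1/q}t$ followed by the Laplace-transform identity $\int_0^\infty e^{-v^qs}f_{p/q}(s)\,\rd s=e^{-v^p}$, plus the exact cancellation of the powers of $s$ in the $\alpha$-moment for the properness claim — is exactly the paper's computation. The one place you genuinely depart from the paper is the verification that $R'$ is an admissible Rosi\'nski measure: the paper checks the integrability conditions \eqref{eq: integ cond on R} for $R'$ directly, case by case in $\alpha$ (mirroring the structure of the proof of Proposition \ref{prop: change alphas}), whereas you first establish $M'=M$ by Tonelli (valid for nonnegative integrands regardless of admissibility) and then invoke the necessity direction of Part 2 of Theorem \ref{thrm:cond on R} to conclude that $R'$ must satisfy \eqref{eq: R(0)=0} and \eqref{eq: integ cond on R} with exponent $q$. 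This shortcut is legitimate — the necessity half of that theorem is proved for an arbitrary nonzero $R$ and uses only the representation \eqref{eq:levy m} — and it buys you the elimination of the three-case estimate at no cost; the paper's direct verification is more self-contained and does not lean on the logical structure of Theorem \ref{thrm:cond on R}, but is otherwise equivalent. No gaps.
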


This implies that for a fixed $\alpha$ the parameters $p$ and $R$ are not jointly identifiable even within the subclass of proper tempered stable distributions.

\begin{proof}
First we show that $R'$ is, in fact, the Rosi\'nski measure of a $q$-tempered $\alpha$-stable distribution. We have
\begin{eqnarray*}
\int_{|x|\le 1}|x|^2 R'(\rd x) &=& \int_{\mathbb R^d}|x|^2 \int_{|x|^q}^\infty s^{-(2-\alpha)/q} f_{p/q}(s) \rd s R(\rd x)\\
&\le& \int_{|x|\le1}|x|^2 \int_0^\infty s^{-(2-\alpha)/q} f_{p/q}(s) \rd s R(\rd x)\\
&& \quad +\int_{|x|>1}|x|^\alpha R(\rd x) \int_0^\infty f_{p/q}(s) \rd s <\infty.
\end{eqnarray*}
If $\alpha\ne0$ and $\beta=\alpha\vee0$ then
\begin{eqnarray*}
\int_{|x|>1}|x|^\beta R'(\rd x) &=& \int_{\mathbb R^d}|x|^\beta \int_0^{|x|^q} s^{-(\beta-\alpha)/q} f_{p/q}(s) \rd s R(\rd x)\\
&\le& \int_{|x|\le1}|x|^2 \int_0^\infty s^{-(2-\alpha)/q} f_{p/q}(s) \rd s R(\rd x)\\
&& \quad +\int_{|x|>1}|x|^\beta \int_0^\infty s^{-(\beta-\alpha)/q} f_{p/q}(s) \rd s R(\rd x)<\infty.
\end{eqnarray*}
If $\alpha=0$ then
\begin{eqnarray*}
\int_{|x|>1}\log|x|R'(\rd x) &=& \int_{\mathbb R^d}\int_0^{|x|^q}\log|x s^{-1/q}| f_{p/q}(s) \rd s R(\rd x)\\
&\le& \int_{|x|\le1}|x|^2 R(\rd x)\int_0^\infty s^{-2/q} f_{p/q}(s) \rd s\\
&& \quad \int_{|x|>1}\log|x| R(\rd x)\int_0^\infty f_{p/q}(s) \rd s \\
&& \quad \int_{|x|>1} R(\rd x) \int_0^\infty s^{-1/q} f_{p/q}(s) \rd s<\infty,
\end{eqnarray*}
where the inequality uses the fact that $\log |x|\le |x|$ (see 4.1.36 in \cite{Abramowitz:Stegun:1972}).

If $M'$ is the L\'evy measure of $TS^q_\alpha(R',b)$ then by \eqref{eq:levy m}
\begin{eqnarray*}
M'(A) &=& \int_{\mathbb R^d}\int_0^\infty\int_0^\infty 1_A(s^{-1/q}tx)t^{-1-\alpha}e^{-t^q}\rd t s^{\alpha/q}f_{p/q}(s)\rd s R(\rd x) \\
&=& \int_{\mathbb R^d}\int_0^\infty 1_A(vx)v^{-1-\alpha}\int_0^\infty e^{-v^qs}f_{p/q}(s)\rd s \rd v R(\rd x) \\
&=& \int_{\mathbb R^d}\int_0^\infty 1_A(vx)v^{-1-\alpha}e^{-v^p} \rd v R(\rd x).
\end{eqnarray*}

The last part follows from \eqref{eq: cond for proper} and the fact that
\begin{eqnarray*}
\int_{\mathbb R^d}|x|^\alpha R'(\rd x) = \int_{\mathbb R^d}|x|^\alpha R(\rd x) \int_0^\infty s^{-\alpha/q} s^{\alpha/q} f_{p/q}(s) \rd s = \int_{\mathbb R^d}|x|^\alpha R(\rd x).
\end{eqnarray*}
This completes the proof.
\end{proof}

Propositions \ref{prop: change alphas} and \ref{prop: change ps} give a constructive proof of the following
result, a version of which was shown in \cite{Maejima:Nakahara:2009}.

\begin{corollary} \label{prop: not ident} Fix $\alpha<2$, $p>0$, and let $\mu\in TS^p_\alpha$.\\
1. For any $q\ge p$, $\mu\in TS^q_\alpha$.\\
2. For any $\beta\le\alpha$, $\mu\in TS^p_{\beta}$.
\end{corollary}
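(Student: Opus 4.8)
The plan is to read both statements directly off the two preceding propositions, handling the boundary cases of equality separately since those fall outside the strict inequalities assumed there. In each case I would first use the fact, recorded in the remark following Theorem \ref{thrm:cond on R}, that every element of the class admits a representation $TS^p_\alpha(R,b)$ through its (unique) Rosi\'nski measure, and then feed this into the appropriate transformation.

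For part 1, I would fix $q\ge p$. If $q=p$ there is nothing to prove. If $q>p$, I would write $\mu=TS^p_\alpha(R,b)$ and apply Proposition \ref{prop: change ps} verbatim with the given $p$ and $q$: it produces a measure $R'$ that is the Rosi\'nski measure of a $q$-tempered $\alpha$-stable distribution satisfying $\mu=TS^q_\alpha(R',b)$, which is exactly the assertion $\mu\in TS^q_\alpha$.

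For part 2, I would fix $\beta\le\alpha$. Again $\beta=\alpha$ is trivial, so assume $\beta<\alpha$. The only delicate point is matching variable names correctly: Proposition \ref{prop: change alphas} \emph{lowers} the index of stability, carrying a $p$-tempered distribution of the larger index to one of the smaller index. I would therefore apply it with its ``larger'' index taken to be our $\alpha$ and its ``smaller'' index taken to be our $\beta$. Its hypotheses require the larger index to lie below $2$, which holds because $\alpha<2$, and the smaller index to be strictly below it, which holds because $\beta<\alpha$. Writing $\mu=TS^p_\alpha(R,b)$ and invoking the proposition then yields a Rosi\'nski measure $R'$ with $\mu=TS^p_\beta(R',b)$, i.e.\ $\mu\in TS^p_\beta$.

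I expect essentially no analytic obstacle, since all integrability checks on $R'$ and the explicit recomputations of the L\'evy measure have already been completed inside the proofs of the two propositions. The work here is purely organizational: expressing $\mu$ through its Rosi\'nski measure before transforming, correctly identifying the roles our $\alpha$ and $\beta$ play in the notation of Proposition \ref{prop: change alphas}, and separately disposing of the equality cases $q=p$ and $\beta=\alpha$, which are immediate because the corresponding classes coincide.
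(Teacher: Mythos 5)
Your proof is correct and is exactly the paper's argument: the paper simply notes that Propositions \ref{prop: change alphas} and \ref{prop: change ps} give a constructive proof of this corollary, which is what you do, with the variable roles in Proposition \ref{prop: change alphas} matched correctly and the trivial equality cases $q=p$, $\beta=\alpha$ disposed of separately.
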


We end this section by characterizing when a $p$-tempered $\alpha$-stable distribution is $\beta$-stable
for some $\beta\in(0,2)$.

\begin{proposition}
Fix $\alpha<2$, $p>0$, and $\beta\in(0,2)$. Let $\mu$ be a $\beta$-stable distribution with spectral measure $\sigma\ne0$. If $\beta\le\alpha$ then $\mu\notin\ts$. If $\beta\in(0\vee\alpha,2)$ then $\mu= TS^p_\alpha(R,b)$ and
\begin{eqnarray}\label{eq: R meas of beta stable}
R(A) = K^{-1}\int_{\mathbb S^{d-1}}\int_0^\infty 1_A(r\xi) r^{-1-\beta}\rd r \sigma(\rd\xi), \quad A\in\mathfrak B(\mathbb R^d),
\end{eqnarray}
where $K = \int_0^\infty t^{\beta-\alpha-1}e^{-t^p}\rd t$.
\end{proposition}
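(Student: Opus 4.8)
The plan is to split the argument into the non\hyp membership claim ($\beta\le\alpha$) and the construction claim ($\beta\in(0\vee\alpha,2)$), treating them separately. For the construction, I would recall that by \eqref{eq: M alpha in TS} (with $\alpha$ replaced by $\beta$) the L\'evy measure of a $\beta$-stable distribution with spectral measure $\sigma$ is
\[
L(A) = \int_{\mathbb S^{d-1}}\int_0^\infty 1_A(r\xi)\, r^{-1-\beta}\,\rd r\,\sigma(\rd\xi),
\]
and the goal is to show that the candidate $R$ in \eqref{eq: R meas of beta stable} is an admissible Rosi\'nski measure whose associated L\'evy measure, computed via \eqref{eq:levy m}, is exactly $L$.

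First I would verify admissibility, i.e.\ that $R$ satisfies \eqref{eq: R(0)=0} and \eqref{eq: integ cond on R}. Since $R$ is a pure radial power law with density proportional to $r^{-1-\beta}$ on each ray, $R(\{0\})=0$ is immediate and $\alpha<2$ is assumed, so \eqref{eq: R(0)=0} holds. For \eqref{eq: integ cond on R}, the small\hyp ball piece gives $\int_{|x|\le1}|x|^2R(\rd x)\propto\int_0^1 r^{1-\beta}\,\rd r$, finite because $\beta<2$; the tail piece, in each of the three regimes, reduces to $\int_1^\infty r^{\alpha-1-\beta}\,\rd r$ (for $\alpha\in(0,2)$), $\int_1^\infty(1+\log r)r^{-1-\beta}\,\rd r$ (for $\alpha=0$), or $\int_1^\infty r^{-1-\beta}\,\rd r$ (for $\alpha<0$), all finite precisely because $\beta>\alpha$ and $\beta>0$, which is exactly the hypothesis $\beta\in(0\vee\alpha,2)$.

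Then I would compute the L\'evy measure. Substituting \eqref{eq: R meas of beta stable} into \eqref{eq:levy m} and writing $x=r\xi$ yields a triple integral over $t,r,\xi$ with nonnegative integrand, so Tonelli justifies all interchanges. The key step is the change of variables $v=tr$ (for fixed $r,\xi$), which turns the inner $t,r$-integral into $\int_0^\infty 1_A(v\xi)\,v^{-1-\alpha}\bigl(\int_0^\infty r^{\alpha-\beta-1}e^{-(v/r)^p}\,\rd r\bigr)\,\rd v$; a further substitution $s=v/r$ collapses the bracket to $v^{\alpha-\beta}\int_0^\infty s^{\beta-\alpha-1}e^{-s^p}\,\rd s=v^{\alpha-\beta}K$. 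The factor $v^{-1-\alpha}\cdot v^{\alpha-\beta}=v^{-1-\beta}$, together with the cancellation of $K$ against the $K^{-1}$ in \eqref{eq: R meas of beta stable}, reproduces $L$ exactly, and Theorem \ref{thrm:cond on R} then gives $\mu=TS^p_\alpha(R,b)$ with $b$ the shift of $\mu$. Finiteness of $K$ (needed for the substitution to make sense) requires $\beta-\alpha-1>-1$, i.e.\ $\beta>\alpha$, again matching the hypothesis; this nested\hyp substitution computation is the main technical obstacle, but it is routine once the order of integration is handled carefully.

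Finally, for the non\hyp membership claim I would argue by contradiction using Lemma \ref{lemma: for ident}. A direct computation gives $L(|x|>s)=\sigma(\mathbb S^{d-1})\beta^{-1}s^{-\beta}$, so $s^\alpha L(|x|>s)=\sigma(\mathbb S^{d-1})\beta^{-1}s^{\alpha-\beta}$. If $\beta\le\alpha$ this quantity does not tend to $0$ as $s\to\infty$: it is a positive constant when $\beta=\alpha$ and diverges when $\beta<\alpha$. But if $\mu$ were in $\ts$, then $\sigma\ne0$ forces $L\ne0$ and hence the Rosi\'nski measure $R\ne0$, so Lemma \ref{lemma: for ident}(1) would require $s^\alpha L(|x|>s)$ to be decreasing with limit $0$ --- a contradiction. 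Hence $\mu\notin\ts$.

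\newcommand{\hyp}{\mbox{-}}
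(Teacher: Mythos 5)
Your proposal is correct, and the construction for $\beta\in(0\vee\alpha,2)$ is essentially the paper's argument: the same admissibility checks on $R$ against \eqref{eq: R(0)=0} and \eqref{eq: integ cond on R}, and the same cancellation of $K$ in the computation of $M$ from \eqref{eq:levy m} (the paper performs a single substitution $v=rt$ in the $r$ variable and integrates out $t$ to produce $K$ directly, whereas you nest two substitutions, but the computation is the same). Where you genuinely diverge is the non-membership claim for $\beta\le\alpha$. The paper argues structurally: writing the stable L\'evy measure in the polar form \eqref{eq:m} and invoking uniqueness of the polar decomposition of L\'evy measures, the tempering function is forced to be $q(r,u)=r^{(\alpha-\beta)/p}$, which is not completely monotone when $\beta<\alpha$ and fails \eqref{eq: q goes to zero} when $\beta=\alpha$. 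You instead argue asymptotically via Lemma \ref{lemma: for ident}(1): $s^\alpha L(|x|>s)=\sigma(\mathbb S^{d-1})\beta^{-1}s^{\alpha-\beta}$ is constant when $\beta=\alpha$ and divergent when $\beta<\alpha$, contradicting the required monotone decay to zero (and your observation that $\sigma\ne0$ forces $R\ne0$, needed to apply the lemma, is the right justification). Both routes are valid; the paper's pins down exactly which defining property of $q$ fails and needs only the uniqueness of the polar decomposition, while yours trades that external input for Lemma \ref{lemma: for ident}, which the paper has already established and which handles both subcases $\beta<\alpha$ and $\beta=\alpha$ in one stroke. Your remark that finiteness of $K$ itself encodes the hypothesis $\beta>\alpha$ is a nice touch not made explicit in the paper.
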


Note that
$$
\int_{\mathbb R^d}|x|^\alpha R(\rd x)= K^{-1}\sigma(\mathbb S^{d-1})\int_0^\infty r^{-(\beta-\alpha)-1}\rd r =\infty.
$$
Thus, by Part 3 of Theorem \ref{thrm:cond on R}, no stable distributions are proper $p$-tempered
$\alpha$-stable.

\begin{proof}
If $\mu\in\ts$ then its L\'evy measure can be written as \eqref{eq:m}. By uniqueness of the polar
decomposition of L\'evy measures (see Lemma 2.1 in \cite{Barndorff-Nielsen:Maejima:Sato:2006}) the function
$q(r,u)=r^{(\alpha-\beta)/p}$. This is not completely monotone when $\beta<\alpha$, and it does not
satisfy \eqref{eq: q goes to zero} when $\beta=\alpha$.

Now assume that $\beta>\alpha$ and let $R$ be as in \eqref{eq: R meas of beta stable}. In this case $R(\{0\})=0$ and for any $\gamma\in[0,\beta)$
$$
\int_{\mathbb R^d}\left(|x|^2\wedge|x|^\gamma\right)R(\rd x) = K^{-1}\sigma(\mathbb S^{d-1})\int_0^\infty (r^{1-\beta}\wedge r^{\gamma-\beta-1})\rd r<\infty.
$$
Thus, by Theorem \ref{thrm:cond on R}, $R$ is the Rosi\'nski measure of a $p$-tempered $\alpha$-stable distribution. If $M$ is the L\'evy measure of $\ts(R,b)$ then
\begin{eqnarray*}
M(A) &=& K^{-1}\int_{\mathbb S^{d-1}}\int_0^\infty\int_0^\infty 1_A(rt\xi) t^{-1-\alpha}e^{-t^p}\rd t r^{-1-\beta}\rd r\sigma(\rd\xi)\\
&=& K^{-1}\int_0^\infty t^{\beta-\alpha-1} e^{-t^p} \rd t \int_{\mathbb S^{d-1}} \int_0^\infty  1_A(r\xi) r^{-1-\beta} \rd r\sigma(\rd\xi)\\
&=& \int_{\mathbb S^{d-1}} \int_0^\infty  1_A(r\xi) r^{-1-\beta} \rd r\sigma(\rd\xi),
\end{eqnarray*}
which is the L\'evy measure of $\mu$.
\end{proof}

\section{Moments}\label{sec: moments}

In this section we give necessary and sufficient conditions for the finiteness of moments and exponential moments. We also give explicit formulas for the cumulants when they exist. This is useful, for instance, in parameter estimation by the method of moments.  First we introduce some notation. For any $x\in\mathbb R^d$ let $x_i$ be the $i$th component. For simplicity, throughout this section, we will use $M$ to denote the L\'evy measure of a $p$-tempered $\alpha$-stable distribution.

Let $k$ be a $d$-dimensional vector of nonnegative integers. Let $C_\mu$ be as in \eqref{eq: inf div char func}. Recall that we define the cumulant
\begin{eqnarray}
c_k = (-i)^{\sum k_i} \frac{\partial^{\sum k_i}}{\partial z_d^{k_d}\cdots\partial z_1^{k_1}}C_\mu(z) \Big|_{z=0},
\end{eqnarray}
when the derivative exists and is continuous in a neighborhood of zero. Cumulants can be uniquely expressed in
terms of moments. Let $X\sim \mu$. When $k_i=1$ and $k_j=0$ for all $j\ne i$ then $c_k=\rE X_i$,  when $k_i=2$
and $k_j=0$ for all $j\ne i$ then $c_k=\mathrm{var}(X_i)$, and when for some $i\ne j$ $k_i=k_j=1$ and
$k_\ell=0$ for all $\ell\ne i,j$ then $c_k = \mathrm{cov}(X_i,X_j)$. In the statement of the following
theorem, we adopt the convention that $0^0=1$.

\begin{theorem}\label{thrm:moments}
Fix $\alpha<2$, $p>0$ and let $\mu=TS_\alpha^p(R,b)$.\\
1. If $\alpha\in(0,2)$ and $q_1,\dots,q_d\ge0$ with $q:=\sum_{j=1}^d q_j < \alpha$ then
\begin{eqnarray}\label{eq: always finite for less than alpha}
\int_{\mathbb{R}^d}\left(\prod_{j=1}^d|x_j|^{q_j}\right)\mu(\rd x) \le \int_{\mathbb{R}^d}|x|^q\mu(\rd
x)<\infty.
\end{eqnarray}
2. If $\alpha\in(0,2)$ then
\begin{eqnarray}\label{eq: when norm finite at alpha}
\int_{\mathbb{R}^d}|x|^\alpha\mu(\rd x)<\infty \Longleftrightarrow \int_{|x|>1}|x|^\alpha\log|x| R(\rd x)<\infty.
\end{eqnarray}
Additionally, if $q_1,\dots,q_d\ge0$ with $\sum_{j=1}^d q_j = \alpha$ then
\begin{eqnarray}
\int_{\mathbb{R}^d}\left(\prod_{j=1}^d|x_j|^{q_j}\right)\mu(\rd x)<\infty
\end{eqnarray}
if and only if
\begin{eqnarray}\label{eq: conf for finite moment equal alpha}
\int_{|x|>1}\left(\prod_{j=1}^d|x_j|^{q_j}\right)\log|x| R(\rd x)<\infty.
\end{eqnarray}
3. If $q>\left(\alpha\vee0\right)$ then
\begin{eqnarray}\label{eq: conf for finite moment bigger than alpha}
\int_{\mathbb{R}^d}|x|^q\mu(\rd x)<\infty \Longleftrightarrow \int_{|x|>1}|x|^qR(\rd x)<\infty.
\end{eqnarray}
Additionally, if $q_1,\dots,q_d\ge0$ with $\sum_{j=1}^d q_j>\left(\alpha\vee0\right)$ then
\begin{eqnarray}\label{eq: mixed moments finite mu}
\int_{\mathbb{R}^d}\left(\prod_{j=1}^d|x_j|^{r_j}\right)\mu(\rd x)<\infty \mbox{\ for\ all\ } r_k\in[0,q_k],
\
k=1,\dots,d
\end{eqnarray}
if and only if
\begin{eqnarray}\label{eq: mixed moments finite R}
\int_{|x|>1}\left(\prod_{j=1}^d|x_j|^{r_j}\right) R(\rd x)<\infty \mbox{\ for\ all\ } r_k\in[0,q_k], \
k=1,\dots,d.
\end{eqnarray}
4. Let $q_1,\dots,q_d$ be nonnegative integers and let $q=\sum_{i=1}^d q_i$. Further if $q=\alpha$
assume that \eqref{eq: conf for finite moment equal alpha} holds and if $q>\alpha$ that \eqref{eq: mixed
moments finite R} holds. If $q_i = q=1$ for some $i$ then
\begin{eqnarray}
c_{(q_1,\dots,q_d)}= b_i + \int_{\mathbb R^d}\int_0^\infty x_i \frac{|x|^2}{1+|x|^2t^2} t^{2-\alpha} e^{-t}\rd t R(\rd x).
\end{eqnarray}
If $q\ge 2$ then
\begin{eqnarray}
c_{(q_1,\dots,q_d)} = p^{-1}\Gamma\left(\frac{n-\alpha}{p}\right)\int_{\mathbb R^d}\left(\prod_{j=1}^d x_j^{q_j}\right) R(\rd x).
\end{eqnarray}
\end{theorem}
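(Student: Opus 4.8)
The plan is to reduce every moment of $\mu$ to a tail integral of its Lévy measure $M$ and then rewrite that integral in terms of $R$ via \eqref{eq:levy m}. For the first reduction I would invoke the standard fact (Theorem 25.3 in \cite{Sato:1999}) that for a locally bounded submultiplicative $g\ge0$ one has $\int_{\mathbb R^d}g\,\rd\mu<\infty$ iff $\int_{|x|>1}g\,\rd M<\infty$. Applied to $g(x)=(1\vee|x|)^q$ this handles the norm statements, and applied to $g(x)=\prod_j(1\vee|x_j|)^{q_j}$ it will handle the mixed ones. For the second reduction, the engine is the identity obtained by substituting \eqref{eq:levy m}; writing $s=\sum_j r_j$,
\[
\int_{|x|>1}\prod_{j=1}^d|x_j|^{r_j}\,M(\rd x)=\int_{\mathbb R^d}\Big(\prod_{j=1}^d|x_j|^{r_j}\Big)\Big(\int_{1/|x|}^\infty t^{s-1-\alpha}e^{-t^p}\,\rd t\Big)R(\rd x).
\]
Everything then reduces to the asymptotics of the inner kernel $K_s(\rho):=\int_{1/\rho}^\infty t^{s-1-\alpha}e^{-t^p}\,\rd t$ with $\rho=|x|$.

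For $|x|\le1$ the lower limit exceeds $1$ and the factor $e^{-t^p}$ forces $K_s(|x|)\le C|x|^2$ as $|x|\downarrow0$, so that region is always dominated by $\int_{|x|\le1}|x|^2R<\infty$ from \eqref{eq: integ cond on R} and never causes trouble. For $|x|>1$ I would split $\int_{1/|x|}^1+\int_1^\infty$: the second piece is a finite constant, and on the first $e^{-t^p}\asymp1$, so $K_s(|x|)\asymp\int_{1/|x|}^1 t^{s-1-\alpha}\,\rd t$, which behaves like $|x|^{\alpha-s}$ when $s<\alpha$, like $\log|x|$ when $s=\alpha$, and is bounded between positive constants when $s>\alpha$. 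The case $s=q<\alpha$ then gives $|x|^q K_q(|x|)\asymp|x|^\alpha$, so the moment is bounded by $\int_{|x|>1}|x|^\alpha R<\infty$, proving Part 1; the case $s=\alpha$ produces the extra $\log|x|$ factor, giving Part 2; and $s=q>\alpha$ gives $K_q(|x|)\asymp1$, so the $M$-integral is comparable to the same integral against $R$, giving the norm statement of Part 3.

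The mixed moments are where the work lies. The monomial $\prod_j|x_j|^{q_j}$ is not submultiplicative (a single coordinate can be large while the product vanishes), so it cannot be fed directly to Theorem 25.3; the correct surrogate is $\prod_j(1\vee|x_j|)^{q_j}$, whose finiteness against $\mu$ is equivalent to the whole family \eqref{eq: mixed moments finite mu} because $\prod_j(1\vee|x_j|)^{q_j}\asymp\sum_{r}\prod_j|x_j|^{r_j}$ over the $2^d$ corners $r_j\in\{0,q_j\}$, and every admissible $\prod_j|x_j|^{r_j}$ with $r\le q$ is dominated by it. (Conceptually this is why \emph{all} $r\le q$ must appear: decomposing $M$ into small and large jumps, the large-jump part is compound Poisson and the multinomial expansion of its product moment produces exactly the sub-exponents $r\le q$.) The same comparison, using $R(|x|>1)<\infty$, converts $\int_{|x|>1}\prod_j(1\vee|x_j|)^{q_j}R<\infty$ into \eqref{eq: mixed moments finite R}. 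The crux is to match the two families through the displayed identity simultaneously over all $r\le q$: for $|r|>\alpha$ one gets the clean comparison $\int_{|x|>1}\prod|x_j|^{r_j}M\asymp\int_{|x|>1}\prod|x_j|^{r_j}R$; for $|r|<\alpha$ both sides are automatically finite, each being bounded by $\int_{|x|>1}|x|^\alpha R$; and the delicate case is $|r|=\alpha$, where a logarithmic factor appears. I expect this boundary bookkeeping to be the main obstacle: one must argue that, because $\sum_j q_j>\alpha$ leaves slack, the logarithmic condition is implied by the plain family via $\log|x|\le C_\epsilon|x|^\epsilon$ absorbed into a strictly larger admissible monomial, and verifying that the slack can always be placed in a coordinate that is not already saturated is exactly the subtle point.

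Finally, for the cumulants (Part 4) I would differentiate \eqref{eq: inf div char func} under the integral sign, the differentiation being legitimate by the moment finiteness just established (the derivative of $e^{i\langle z,x\rangle}-1-i\langle z,x\rangle/(1+|x|^2)$ of total order $\sum k_i\ge2$ is $(ix)^{k}e^{i\langle z,x\rangle}$, dominated near infinity by $\prod_j|x_j|^{k_j}$ under the hypotheses \eqref{eq: conf for finite moment equal alpha}/\eqref{eq: mixed moments finite R} and near $0$ by $|x|^{q}\le|x|^2$). For $\sum k_i\ge2$ the drift and the linear compensator are annihilated, leaving $c_k=\int_{\mathbb R^d}\prod_j x_j^{k_j}M(\rd x)$; substituting \eqref{eq:levy m} and separating variables gives $c_k=\big(\int_0^\infty t^{q-1-\alpha}e^{-t^p}\,\rd t\big)\int_{\mathbb R^d}\prod_j x_j^{k_j}R(\rd x)=p^{-1}\Gamma((q-\alpha)/p)\int_{\mathbb R^d}\prod_j x_j^{k_j}R(\rd x)$, where the $t$-integral is evaluated by the substitution $u=t^p$. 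For $q=1$ the linear compensator survives and, together with $b_i$ from the drift, produces the first-moment formula after the same substitution is applied to $\int_{\mathbb R^d}x_i\,|x|^2(1+|x|^2)^{-1}M(\rd x)$.
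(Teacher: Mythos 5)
Your reduction scheme is the same as the paper's: pass from $\mu$-moments to tail integrals of $M$ via Sato's $g$-moment theorem, then to $R$ via \eqref{eq:levy m} and the kernel $K_s(\rho)=\int_{1/\rho}^{\infty}t^{s-1-\alpha}e^{-t^p}\,\rd t$, whose three regimes ($\asymp\rho^{\alpha-s}$, $\asymp 1+\log\rho$, $\asymp 1$ according to $s<\alpha$, $s=\alpha$, $s>\alpha$) you identify correctly; the paper handles these regimes by pointing to Proposition 2.7 of \cite{Rosinski:2007}. Your one real departure is replacing the citation of Sapatinas--Shanbhag by the submultiplicative surrogate $\prod_j(1\vee|x_j|)^{q_j}$; that substitution is legitimate (the surrogate is submultiplicative, is comparable to the sum of the corner monomials, and dominates every monomial with $r\le q$), and it makes the $\mu\leftrightarrow M$ step self-contained. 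Parts 1, 2, the norm statement of Part 3, and Part 4 go through as you describe.

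The gap is exactly where you suspected it, and your proposed repair does not close it. In the direction \eqref{eq: mixed moments finite R}$\Rightarrow$\eqref{eq: mixed moments finite mu} you must control, for each $r\le q$ with $|r|=\alpha$, the integral $\int_{|x|>1}\bigl(\prod_j|x_j|^{r_j}\bigr)\log|x|\,R(\rd x)$ using only the plain family \eqref{eq: mixed moments finite R}. Your plan is to write $\log|x|\le C_\epsilon|x|^\epsilon$ and absorb $|x|^\epsilon$ into a coordinate with slack $r_k<q_k$; but $|x|^\epsilon$ is comparable to $\max_j|x_j|^\epsilon$, not to $|x_k|^\epsilon$ for a coordinate of your choosing, and the inequality $\log|x|\le C\prod_j|x_j|^{\epsilon_j}$ fails wherever an unsaturated coordinate vanishes. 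Concretely, take $d=2$, $\alpha=1$, $q=(1,1)$, and let $R$ be carried by $\{(t,0):t>e\}$ with $R(\rd t)=t^{-2}(\log t)^{-2}\rd t$ (a valid Rosi\'nski measure, since $\int(|x|^2\wedge|x|)R(\rd x)<\infty$). Every monomial in \eqref{eq: mixed moments finite R} with $r_2>0$ integrates to zero, and $\int_{|x|>1}|x_1|^{r_1}R(\rd x)<\infty$ for $r_1\le1$, so \eqref{eq: mixed moments finite R} holds; yet for $r=(1,0)$, which has $|r|=\alpha$, one gets $\int_{|x|>1}|x_1|\log|x|\,R(\rd x)=\int_e^\infty t^{-1}(\log t)^{-1}\rd t=\infty$, so by Part 2 (applied to the first coordinate) $\rE|X_1|=\infty$ and \eqref{eq: mixed moments finite mu} fails. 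So no bookkeeping of the kind you envisage can work: the implication itself breaks on the slice $|r|=\alpha$ when $R$ charges a coordinate subspace on which the slack coordinates vanish. You should either restrict to configurations where every $r\le q$ with $|r|=\alpha$ is controlled by a strictly larger admissible monomial on the support of $R$, or add the logarithmic conditions \eqref{eq: conf for finite moment equal alpha} for all such $r$ to the hypothesis. Note that the paper's own proof is silent on this boundary slice --- it invokes \eqref{eq: always finite for less than alpha} only for $|r|<\alpha$ --- so you have located a soft spot in the source rather than merely failed to reproduce it; but as written your argument does not prove the stated equivalence.
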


For proper $1$-tempered $\alpha$-stable distributions with $\alpha\in(0,2)$ a somewhat weaker version of Part
(iv) above was given in \cite{Terdik:Woyczynski:2006}.

\begin{proof}
By Corollary 25.8 in \cite{Sato:1999}, the condition $\int_{\mathbb{R}^d}|x|^q\mu(\rd x)<\infty$ is equivalent
to the condition $\int_{|x|>1}|x|^qM(\rd x)<\infty$. Similarly, by Theorem 1 in \cite{Sapatinas:Shanbhag:2010}
the condition $\int_{\mathbb{R}^d}\left(\prod_{j=1}^d|x_j|^{r_j}\right)\mu(\rd x)<\infty$ for all
$r_k\in[0,q_k]$, $k=1,\dots,d$ is equivalent to the condition
$\int_{|x|>1}\left(\prod_{j=1}^d|x_j|^{r_j}\right)M(\rd x)<\infty$ for all $r_k\in[0,q_k]$, $k=1,\dots,d$.

We will now transfer the integrability conditions from $M$ to $R$. Let $f_q(x)$ be either $|x|^q$ or
$\prod_{j=1}^d|x_j|^{r_j}$ where $\sum^d_{j=1} r_j=q$. By \eqref{eq:levy m}
\begin{eqnarray*}
\int_{|x|>1}f_q(x)M(\rd x) = \int_{\mathbb R^d}\int_{|x|^{-1}}^\infty f_q(x) t^{q-1-\alpha}e^{-t^p}\rd t R(\rd x).
\end{eqnarray*}
From here  \eqref{eq: when norm finite at alpha} and Parts 1 and 2 follow by arguments
similar to those in Proposition 2.7 of \cite{Rosinski:2007}. The second half of Part 2 essentially
follows from arguments similar to those in Proposition 2.7 of \cite{Rosinski:2007} as well, but to guarantee
that the integral remains finite for all $r_k\in[0,q_k)$ we use \eqref{eq: always finite for less than
alpha}.

For general infinitely divisible distributions, the form of the cumulants in terms of the L\'evy measure is
given in Theorems 5.1 and 5.2 of
\cite{Gupta:Shanbhag:Nguyen:Chen:2009}. From this, Part 4 follows by using \eqref{eq:levy m} and
simplifying.
\end{proof}

In the rest of this section we will give conditions for the finiteness of certain exponential moments.

\begin{theorem}\label{thrm:moments exp}
Fix $\alpha<2$, $p\in(0,1]$, and $\theta>0$. Let $\mu=TS_\alpha^p(R,b)$.\\
1. If $\alpha\in(0,2)$ then
$$
\int_{\mathbb{R}^d}e^{\theta|x|^p}\mu(\rd x)<\infty \Longleftrightarrow R(\{|x|>\theta^{-1/p}\})=0.
$$
2. If $\alpha<0$ then $\int_{\mathbb R^d}e^{\theta|x|^p}\mu(\rd x)<\infty$ if and only if
$$
R(\{|x| \ge \theta^{-1/p}\})=0\ \mbox{and}\ \int_{0<|x|^{-p}-\theta<1}(|x|^{-p}-\theta)^{\alpha/p} R(\rd x)<\infty.
$$
3. If $\alpha = 0$ then $\int_{\mathbb R^d}e^{\theta|x|^p}\mu(\rd x)<\infty$ if and only if
$$
R(\{|x| \ge \theta^{-1/p}\})=0\ \mbox{and}\
\int_{0<|x|^{-p}-\theta<1}\left|\log(|x|^{-p}-\theta)\right|R(\rd x)<\infty.
$$
\end{theorem}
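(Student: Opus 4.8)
The plan is to reduce the exponential moment of $\mu$ to an integral against the Rosi\'nski measure $R$ in two stages. The first stage uses the hypothesis $p\in(0,1]$ in an essential way: since $t\mapsto t^p$ is subadditive for $p\le1$, we have $|x+y|^p\le(|x|+|y|)^p\le|x|^p+|y|^p$, so $g(x):=e^{\theta|x|^p}$ satisfies $g(x+y)\le g(x)g(y)$, i.e.\ $g$ is submultiplicative. By the submultiplicative moment criterion for infinitely divisible laws (Theorem 25.3 in \cite{Sato:1999}), $\int_{\mathbb R^d}e^{\theta|x|^p}\mu(\rd x)<\infty$ if and only if $\int_{|x|>1}e^{\theta|x|^p}M(\rd x)<\infty$. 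Substituting the representation \eqref{eq:levy m} with $y=tx$ and integrating out $t$, I would rewrite this last integral as $\int_{\mathbb R^d}I(x)R(\rd x)$, where
\[
I(x)=\int_{1/|x|}^\infty t^{-1-\alpha}e^{-t^p(1-\theta|x|^p)}\rd t .
\]
The entire problem then becomes: for which $R$ is $x\mapsto I(x)$ integrable?

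The second stage is a case analysis governed by the sign of $1-\theta|x|^p$, with threshold radius $\rho:=\theta^{-1/p}$. If $|x|>\rho$ the exponent is negative, the $t$-integrand grows like $e^{ct^p}$ with $c>0$, and $I(x)=\infty$; this forces $R(\{|x|>\rho\})=0$ in all three cases. On the sphere $|x|=\rho$ the exponent vanishes and $I(x)=\int_{1/\rho}^\infty t^{-1-\alpha}\rd t$, which is finite exactly when $\alpha>0$. This is the source of the dichotomy between the parts: for $\alpha\in(0,2)$ the threshold sphere is harmless, whereas for $\alpha\le0$ it contributes $+\infty$ and one must additionally require $R(\{|x|=\rho\})=0$, giving the condition $R(\{|x|\ge\rho\})=0$.

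It then remains to control the region $|x|<\rho$, on which $R$ is supported once the necessary conditions hold. Near the origin the lower limit $1/|x|$ is large while $1-\theta|x|^p$ stays bounded away from $0$, so the exponential factor forces $I(x)$ to decay faster than any power; in particular $I(x)\le C|x|^2$ there, which is $R$-integrable by \eqref{eq: integ cond on R}. On any shell bounded away from both $0$ and $\rho$, $I$ is bounded and $R$ has finite mass, so no condition arises. The delicate regime is $|x|\uparrow\rho$, i.e.\ $a:=1-\theta|x|^p\downarrow0$. Writing $I(x)=\int_0^\infty-\int_0^{1/|x|}$ and evaluating the full integral via $u=at^p$ gives $\int_0^\infty t^{-1-\alpha}e^{-at^p}\rd t=p^{-1}\Gamma(-\alpha/p)\,a^{\alpha/p}$ for $\alpha<0$, while the truncation $\int_0^{1/|x|}$ stays bounded; hence $I(x)\asymp a^{\alpha/p}$ for $\alpha<0$, and the analogous exponential-integral computation gives $I(x)\asymp|\log a|$ for $\alpha=0$. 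Since $a=|x|^p(|x|^{-p}-\theta)$ with $|x|^p$ bounded above and below near $\rho$, one has $a\asymp|x|^{-p}-\theta$, so $a^{\alpha/p}\asymp(|x|^{-p}-\theta)^{\alpha/p}$ and $|\log a|\asymp|\log(|x|^{-p}-\theta)|$. Feeding these two-sided estimates into $\int I\,\rd R$ produces the stated conditions on $\{0<|x|^{-p}-\theta<1\}$ in Parts 2 and 3 (the localization window being immaterial up to finite quantities, as the complementary shell carries bounded $I$ and finite $R$-mass), and gives automatic finiteness in Part 1.

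I expect the main obstacle to be exactly this boundary analysis: securing sharp two-sided, rather than merely one-sided, asymptotics for $I(x)$ as $a\downarrow0$, so that the integral condition comes out both necessary and sufficient, and then correctly translating the natural variable $a=1-\theta|x|^p$ into the variable $|x|^{-p}-\theta$ appearing in the statement. A secondary care point is the near-origin estimate: one must exploit the exponential decay to obtain an $R$-integrable bound of order $|x|^2$, rather than bounding the exponential by $1$, since the resulting crude bound is not $R$-integrable near the origin when $\alpha\in(0,2)$ and is simply infinite when $\alpha\le0$.
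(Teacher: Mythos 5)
Your proposal is correct and follows essentially the same route as the paper: reduce to $\int_{|x|>1}e^{\theta|x|^p}M(\rd x)$ via Sato's submultiplicative-moment criterion, express this as $\int I(x)R(\rd x)$ with the same inner integral, extract the support condition from the sign of $1-\theta|x|^p$, and obtain two-sided asymptotics of $I(x)$ as $|x|\uparrow\theta^{-1/p}$ (the paper does this by substituting $u=(1-\theta|x|^p)t^p$ and bounding the resulting incomplete gamma integral, which is the same computation in different clothing). The only cosmetic blemish is that for $\alpha=0$ the decomposition $\int_0^\infty-\int_0^{1/|x|}$ is not literally available since both pieces diverge at $t=0$; one instead substitutes directly in $\int_{1/|x|}^\infty$ and uses the logarithmic asymptotics of the exponential integral, as your parenthetical already suggests.
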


This implies that unless $R=0$ it is impossible to have
$
\int_{\mathbb R^d} e^{\theta|x|^p} \mu(\rd x)<\infty\ \mbox{for\ all}\ \theta>0.
$
Note that in Parts 2 and 3 we have the condition, $R(\{|x| \ge \theta^{-1/p}\})=0$, whereas in Part 1 we have
a similar condition, but with strict inequality. Note also that the set $\{0<|x|^{-p}-\theta<1\} =
\{(1+\theta)^{-1/p}<|x|<\theta^{-1/p}\}$. The latter form may be somewhat more appealing, but it loses
emphasis on why the integrals may diverge.

\begin{proof}
The proof of Part 1 is similar to the proof of Proposition 2.7 in \cite{Rosinski:2007}. Now fix
$\alpha\le0$. By Corollary 25.8 in \cite{Sato:1999} the finiteness of
$\int_{\mathbb{R}^d}e^{\theta|x|^p}\mu(\rd x)$ is equivalent to the finiteness of
$\int_{|x|>1}e^{\theta|x|^p}M(\rd x)$. We have
\begin{eqnarray*}
\int_{|x|>1}e^{\theta|x|^p}M(\rd x) &=&
\int_{\mathbb{R}^d}\int^\infty_{|x|^{-1}}e^{(\theta|x|^p-1)t^p}t^{-\alpha-1}\rd tR(\rd x)\\
&\ge& \int_{|x|^p\ge\theta^{-1}}\int_{\theta^{1/p}}^\infty t^{-\alpha-1}\rd t R(\rd x).
\end{eqnarray*}
This shows the necessity of $R(\{|x| \ge \theta^{-1/p}\})=0$ in both Parts 2 and 3. We will
henceforth assume that this property holds both when showing necessity and sufficiency. We have
\begin{eqnarray*}
\int_{|x|>1}e^{\theta|x|^p}M(\rd x) &=&
\int_{|x|<\theta^{-1/p}}\int_{|x|^{-1}}^{\infty}e^{(\theta|x|^p-1)t^p}t^{-1-\alpha}\rd t R(\rd x)\\
&=& p^{-1}\int_{0<|x|^{-p}-\theta}(1-\theta|x|^p)^{\alpha/p}\int_{|x|^{-p}-\theta}^\infty
e^{-u}u^{-1-\alpha/p} \rd u R(\rd x).
\end{eqnarray*}
This can be divided into two parts
\begin{eqnarray*}
&& p^{-1} \int_{1\le|x|^{-p}-\theta}(|x|^{-p}-\theta)^{-|\alpha|/p}|x|^{-|\alpha|}\int_{|x|^{-p}-\theta}^\infty
e^{-u}u^{-1+|\alpha|/p} \rd uR(\rd x)\\
&& \quad +p^{-1} \int_{0<|x|^{-p}-\theta<1} (|x|^{-p}-\theta)^{-|\alpha|/p}|x|^{-|\alpha|} \int_{|x|^{-p}-\theta}^\infty
e^{-u}u^{-1+|\alpha|/p} \rd uR(\rd x)\\
&&=: p^{-1}(I_1 + I_2).
\end{eqnarray*}
Let $C_\theta:=\sup_{u>1}e^{-u}u^{-1+|\alpha|/p}(u+\theta)^{(2+|\alpha|)/p+1}$. We have,
\begin{eqnarray*}
I_1&\le& \int_{1\le|x|^{-p}-\theta}|x|^{-|\alpha|} \int_{|x|^{-p}-\theta}^\infty e^{-u}u^{-1+|\alpha|/p}\rd u R(\rd x)\\
&\le& C_\theta \int_{1\le|x|^{-p}-\theta}|x|^{-|\alpha|} \int_{|x|^{-p}-\theta}^\infty (u+\theta)^{-(2+|\alpha|)/p-1}\rd uR(\rd x)\\
&=& C_\theta \frac{p}{2-\alpha}\int_{|x|\le (1+\theta)^{-1/p}}|x|^2R(\rd x)<\infty.
\end{eqnarray*}
Thus finiteness is determined by $I_2$.

If $\alpha<0$ and $0<|x|^{-p}-\theta<1$, we have
\begin{eqnarray*}
\int_1^\infty e^{-u}u^{-1+|\alpha|/p}\rd u \le \int_{|x|^{-p}-\theta}^\infty
e^{-u}u^{-1+|\alpha|/p}\rd u \le \Gamma(|\alpha|/p)
\end{eqnarray*}
and
$$
\theta^{|\alpha|/p} \le |x|^{-|\alpha|} \le (1+\theta)^{|\alpha|/p}.
$$
Thus, when $\alpha<0$, $I_2$ is finite if and only if
\begin{eqnarray*}
\int_{0<|x|^{-p}-\theta<1}(|x|^{-p}-\theta)^{-|\alpha|/p} R(\rd x)<\infty
\end{eqnarray*}

If $\alpha=0$ then for $0<|x|^{-p}-\theta<1$, we have
\begin{eqnarray*}
\int_{|x|^{-p}-\theta}^\infty e^{-u}u^{-1}\rd u &=& \int_1^\infty
e^{-u}u^{-1}\rd u + \int_{|x|^{-p}-\theta}^1e^{-u}u^{-1}\rd u,
\end{eqnarray*}
where the first integral is finite.  For the second, we have
\begin{eqnarray*}
\int_{|x|^{-p}-\theta}^1e^{-u}u^{-1}\rd u &\le& \int_{|x|^{-p}-\theta}^1u^{-1}\rd u = -\log(|x|^{-p}-\theta),
\end{eqnarray*}
and
\begin{eqnarray*}
\int_{|x|^{-p}-\theta}^1 e^{-u}u^{-1}\rd u &\ge& e^{-1}\int_{|x|^{-p}-\theta}^1 u^{-1}\rd u = - e^{-1}\log(|x|^{-p}-\theta).
\end{eqnarray*}
Thus, when $\alpha=0$, the finiteness of $I_2$ is equivalent to the finiteness of
\begin{eqnarray*}
-\int_{0<|x|^{-p}-\theta<1}\log(|x|^{-p}-\theta)R(\rd x) = \int_{0<|x|^{-p}-\theta<1}\left|\log(|x|^{-p}-\theta)\right|R(\rd x).
\end{eqnarray*}
This completes the proof.
\end{proof}

\begin{theorem}
Fix $\alpha<2$, $p>0$, and let $\mu=TS_\alpha^p(R,b)$.\\
1. If $q\in(0,1]$ with $q<p$ then for any $\theta>0$ 
\begin{eqnarray}\label{eq: finite q exp moment}
\int_{\mathbb{R}^d}e^{\theta|x|^q}\mu(\rd x)<\infty
\end{eqnarray}
whenever
\begin{eqnarray}\label{eq: suff cond for exp moment}
\int_{|x|>1}e^{A_{p,q}(\theta|x|^q)^{p/(p-q)}}|x|^{-\alpha q/(p-q)}R(\rd x)<\infty,
\end{eqnarray}
where $A_{p,q}=\left(q/p\right)^{q/(p-q)}\left(1-q/p\right)$.\\
2. If $R\ne0$ then $\int_{\mathbb{R}^d}e^{\theta|x|\log|x|}\mu(\rd x)=\infty$ for every $\theta>0$.
\end{theorem}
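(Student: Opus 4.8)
The plan is to handle the two parts by genuinely different mechanisms, since the function $g(x)=e^{\theta|x|^q}$ with $q\in(0,1]$ is submultiplicative whereas $g(x)=e^{\theta|x|\log|x|}$ is not. For Part 1, the inequality $|x+y|^q\le|x|^q+|y|^q$ (valid since $q\le1$) shows $e^{\theta|x|^q}$ is submultiplicative, so by Corollary 25.8 in \cite{Sato:1999} the moment \eqref{eq: finite q exp moment} is finite if and only if $\int_{|x|>1}e^{\theta|x|^q}M(\rd x)<\infty$. Using \eqref{eq:levy m} I would rewrite this as $\int_{\mathbb R^d}J(x)R(\rd x)$, where $J(x)=\int_{1/|x|}^\infty e^{\theta|x|^qt^q-t^p}t^{-1-\alpha}\rd t$, and the whole problem reduces to estimating $J(x)$. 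The exponent $h(t)=\theta|x|^qt^q-t^p$ (which tends to $-\infty$ precisely because $q<p$) is maximized at $t_\ast=(q\theta|x|^q/p)^{1/(p-q)}$, and a direct computation gives $h(t_\ast)=A_{p,q}(\theta|x|^q)^{p/(p-q)}$, exactly the exponent in \eqref{eq: suff cond for exp moment}; the accompanying power $|x|^{-\alpha q/(p-q)}$ is a constant multiple of $t_\ast^{-\alpha}$ and should come from the factor $t^{-1-\alpha}$ evaluated near the maximum.

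To make this precise I would substitute $t=t_\ast s$, after which $h(t)=t_\ast^p\phi(s)$ with $\phi(s)=(p/q)s^q-s^p$ peaked at $s=1$ and $\phi(1)=(p-q)/q$, yielding $J(x)=t_\ast^{-\alpha}e^{h(t_\ast)}\int_{s_0}^\infty e^{t_\ast^p(\phi(s)-\phi(1))}s^{-1-\alpha}\rd s$ with $s_0=1/(|x|t_\ast)$. The core estimate is that this last integral is bounded by a constant uniformly for $t_\ast$ large: near $s=1$ one has $\phi(s)-\phi(1)\approx -\tfrac12 p(p-q)(s-1)^2$, contributing a Gaussian of width $O(t_\ast^{-p/2})$, while away from $s=1$ the integrand is super-exponentially small and so dominates any polynomial growth of $s^{-1-\alpha}$ near $s_0$ or at large $s$. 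This gives $J(x)\le Ce^{A_{p,q}(\theta|x|^q)^{p/(p-q)}}|x|^{-\alpha q/(p-q)}$ for $|x|\ge R_0$, integrable by \eqref{eq: suff cond for exp moment}; the annulus $1<|x|\le R_0$ is handled since $J$ is bounded there and $R$ has finite mass on it by \eqref{eq: integ cond on R}; and for $|x|\le1$ the tempering forces $J(x)$ to decay like $e^{-c|x|^{-p}}$, hence $J(x)\le C|x|^2$, integrable against $\int_{|x|\le1}|x|^2R(\rd x)<\infty$.

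For Part 2 submultiplicativity fails, and indeed $\int_{|x|>1}e^{\theta|x|\log|x|}M(\rd x)$ can be \emph{finite} (for instance when $p>1$, where $t^p$ beats $t\log t$), so Corollary 25.8 is unavailable and the divergence must be drawn from the convolution structure. I would decompose $\mu=\nu\ast\pi$, where $\pi$ is the compound Poisson law with Lévy measure $N:=M|_{\{|x|>1\}}$ (finite, of total mass $\lambda$) and $\nu$ is a probability measure carrying the remaining jumps. Since $\nu(\{|x|\le K\})>0$ for $K$ large and $g$ is increasing in $|x|$, $\int g\,\rd\mu$ is bounded below by a constant times $\int_{|y|>K+1}e^{\theta(|y|-K)\log(|y|-K)}\pi(\rd y)$, and because $\pi\ge e^{-\lambda}N^{\ast n}/n!$ for each $n$, it suffices to show $\sum_n\frac1{n!}\int g\,\rd N^{\ast n}=\infty$. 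The key point is that $R\neq0$ forces $M(\{|y|>a\})>0$ for every $a>0$ (each ray through a point of $R$ contributes $M$-mass at all radii), so I may fix $a>2/\theta$ together with a narrow cone $S$ about some direction on which $|y|>a$ and $N(S)=\beta>0$; restricting each factor of the $n$-fold convolution to $S$ aligns the vectors so that $|y_1+\cdots+y_n|\ge na/2$, whence $\frac1{n!}\int g\,\rd N^{\ast n}\ge\frac{\beta^n}{n!}e^{\theta(na/2)\log(na/2)}$.

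Stirling's formula then shows the logarithm of this lower bound behaves like $(\theta a/2-1)n\log n$, which tends to $+\infty$ once $a>2/\theta$, so the series diverges and $\int g\,\rd\mu=\infty$, uniformly in $p>0$. I expect the main obstacle in Part 1 to be the uniform Laplace-type bound on the $s$-integral, namely controlling the interaction of the $s^{-1-\alpha}$ factor with the exponent at the lower limit $s_0$ and at large $s$ so that the bound is independent of $t_\ast$. For Part 2 the crux is recognizing that the moment of $\mu$ diverges \emph{even when the Lévy-measure moment converges}, so that the divergence is genuinely a statement about the compound-Poisson series rather than about $M$; the supporting combinatorial estimate (that the $n$-th term grows because $a$ can be chosen with $\theta a/2>1$) is the technical heart.
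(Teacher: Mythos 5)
Your proposal is correct. Part 1 is essentially the paper's own argument: both reduce via Corollary 25.8 of \cite{Sato:1999} to $\int_{|x|>1}e^{\theta|x|^q}M(\rd x)$, write the inner $t$-integral against $R$, and run a Laplace-type analysis of the exponent $\theta|x|^qt^q-t^p$; the paper parametrizes by $u=t^{p-q}/(\theta|x|^q)$ (so the peak sits at $u=q/p$) while you rescale by the critical point $t_\ast$, but the identification of $h(t_\ast)=A_{p,q}(\theta|x|^q)^{p/(p-q)}$ and of $t_\ast^{-\alpha}$ with the factor $|x|^{-\alpha q/(p-q)}$ is exactly the same computation, and your handling of the regions $|x|\le1$ and $1<|x|\le R_0$ matches the paper's $I_1$ term. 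For Part 2 the underlying idea is also the same --- the whole content is that $R\ne0$ forces $M(|x|>h)>0$ for every $h>0$, so the L\'evy measure has radially unbounded support --- but the paper then simply invokes Theorem 26.1 of \cite{Sato:1999}, whereas you re-derive that theorem from scratch via the compound Poisson decomposition, the cone-alignment lower bound $|y_1+\cdots+y_n|\ge na/2$, and Stirling. Your version is self-contained and makes explicit the genuinely useful point that the submultiplicative criterion (Corollary 25.8) is unavailable here because $\int_{|x|>1}e^{\theta|y|\log|y|}M(\rd y)$ may well be finite; the paper's version is two lines at the cost of the external citation. Both are sound.
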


For the case where $\alpha\in(0,2)$, $p=2$, and $q=1$, a necessary and sufficient condition for \eqref{eq: finite q exp moment} is given in \cite{Bianchi:Rachev:Kim:Fabozzi:2011}. Their method of proof is easily extended to the case when $\alpha<2$ and $p=2q$. In this case, the necessary and sufficient condition for \eqref{eq: finite q exp moment} is
\begin{eqnarray}
\int_{|x|>1} e^{\theta^2|x|^{2q}/4}|x|^{-q-\alpha}R(\rd x)<\infty.
\end{eqnarray}

\begin{proof}
We begin with Part 1. Fix $c=\left(\frac{2p}{\theta q}\right)^{1/p}$. By Corollary 25.8 in \cite{Sato:1999},
the problem is equivalent to the finiteness of
\begin{eqnarray*}
\int_{|x|>1}e^{\theta|x|^q}M(\rd x) &=& \int_{|x|\le c}\int_{|x|^{-1}}^\infty e^{\theta|x|^q t^{q}-t^p}t^{-1-\alpha}\rd tR(\rd x)\\
&& +\int_{|x|> c}\int_{|x|^{-1}}^{(\theta|x|^q)^{1/(p-q)}} e^{\theta|x|^q t^{q}-t^p}t^{-1-\alpha}\rd tR(\rd x)\\
&& + \int_{|x|> c}\int_{(\theta|x|^q)^{1/(p-q)}} ^\infty e^{\theta|x|^q t^q-t^p}t^{-1-\alpha}\rd tR(\rd x)\\
&=:& I_1+I_2 + I_3.
\end{eqnarray*}
For the first integral, we have
\begin{eqnarray*}
I_1 &=& \int_{|x|\le c }\int_{|x|^{-1}}^\infty e^{\theta|x|^q t^{q}-t^p}t^{1-\alpha}t^{-2}\rd tR(\rd x)\\
&\le& \int_{|x|\le c}|x|^2R(\rd x)\int_{c^{-1}}^\infty e^{\theta c^qt^q-t^p}t^{1-\alpha}\rd t<\infty.
\end{eqnarray*}
For the third integral, by the substitution $u=t^{p-q}/(\theta|x|^q)$, we have
\begin{eqnarray*}
I_3 &=& \frac{1}{(p-q)}\int_{|x|> c }(\theta|x|^q)^{-\alpha/(p-q)}\times\\
&&\qquad \times\int_1^\infty e^{-\left(1-1/u\right)(u\theta|x|^q)^{p/(p-q)}} u^{-1-\alpha/(p-q)} \rd uR(\rd x)\\
&\le& \frac{1}{(p-q)}\int_{|x|> c}(\theta|x|^q)^{-\alpha/(p-q)}R(\rd x)\times\\
&&\qquad \times\int_1^\infty e^{-\left(1-1/u\right)(u \theta c^q)^{p/(p-q)}} u^{-1-\alpha/(p-q)} \rd u.
\end{eqnarray*}
Clearly this is finite for $\alpha\in[0,2)$. We will show that it is, in fact, always finite when $I_2<\infty$. To see this, observe that, after the substitution $u=t^{p-q}/(\theta|x|^q)$, we have
\begin{eqnarray}
I_2 &=& \frac{1}{(p-q)}\int_{|x|> c}(\theta|x|^q)^{-\alpha/(p-q)}\times\nonumber\\
&&\qquad \times \int_{|x|^{-p}/\theta}^1 e^{\left(1/u-1\right)(u\theta|x|^q)^{p/(p-q)}} u^{-1-\alpha/(p-q)} \rd u R(\rd x)\label{eq: I2 repar}\\
&\ge& \frac{1}{(p-q)}\int_{|x|> c}(\theta|x|^q)^{-\alpha/(p-q)}R(\rd x)\int_{c^{-p}/\theta}^1 u^{-1-\alpha/(p-q)} \rd u.\nonumber
\end{eqnarray}
Thus everything is determined by $I_2$.

Note that, as a function of $u$, $\left(1/u-1\right)(u\theta|x|^q)^{p/(p-q)}$ is strictly increasing until $u=q/p$, where it attains a maximum and is then decreasing. Thus
\begin{eqnarray}
&& \int_{|x|^{-p}/\theta}^{q/(2p)} e^{\left(1/u-1\right)(u\theta|x|^q)^{p/(p-q)}} u^{-1-\alpha/(p-q)} \rd u
\nonumber \\
&&\qquad  \le e^{\left(2p/q-1\right)(\theta|x|^q q/(2p))^{p/(p-q)}}
(|x|^p\theta)^{0\vee[1+\alpha/(p-q)]}\label{eq: first half}
\end{eqnarray}
and for some constant $C>0$
\begin{eqnarray}\label{eq: second half}
\int_{q/(2p)}^1  e^{\left(1/u-1\right)(u\theta|x|^q)^{p/(p-q)}} u^{-1-\alpha/(p-q)} \rd u \le C e^{\left(p/q-1\right)[q p^{-1}\theta|x|^q]^{p/(p-q)}}.
\end{eqnarray}
Note that $\left(p/q-1\right)(q/p)^{p/(p-q)}=A_{p,q}$. Now observing that the right side in \eqref{eq:
second half} goes to infinity faster than the right side in \eqref{eq: first half}, and combining this with
\eqref{eq: I2 repar} gives Part 1.

Now to show Part 2. For any $h>0$, let $T_h=\{|x|>h\}$. Assume that $R\ne0$. Since $R(\{0\})=0$ there exists
an
$\epsilon>0$ such that $R(T_\epsilon)>0$. Thus for any $h>0$
\begin{eqnarray*}
M(T_h) &=& \int_{\mathbb R^d}\int_{h|x|^{-1}}^\infty e^{-t^p}t^{-1-\alpha}\rd t R(\rd x)\\
&\ge& \int_{|x|>\epsilon}\int_{h\epsilon^{-1}}^\infty e^{-t^p}t^{-1-\alpha}\rd t R(\rd x)
= R(T_\epsilon)\int_{h\epsilon^{-1}}^\infty e^{-t^p}t^{-1-\alpha}\rd t>0.
\end{eqnarray*}
From here the result follows by Theorem 26.1 in \cite{Sato:1999}.
\end{proof}

\section{Regular Variation}\label{sec: Reg Var for TS}

In this section we give necessary and sufficient conditions for tempered stable distributions to have regularly varying tails. To simplify the notation, we adopt the following convention. For $c\in\mathbb R$ and real-valued functions $f,g$ with $g$ strictly positive in some neighborhood of infinity we write $f(t)\sim cg(t)$ as $t\rightarrow\infty$ to mean
$$
\lim_{t\rightarrow\infty}\frac{f(t)}{g(t)}=c.
$$
We now recall what it means for a measure to have regularly varying tails.

\begin{definition}
Fix $\varrho\ge0$. Let $R$ be a Borel measure on $\mathbb R^d$ such that for some $T>0$
\begin{eqnarray}
R(\{x \in \mathbb R^d : |x| > T\}) < \infty
\end{eqnarray}
and for all $s > 0$
\begin{eqnarray}
R(\{x \in\mathbb R^d : |x| > s \}) > 0.
\end{eqnarray}
We say that $R$ has \textbf{regularly varying tails with index $\varrho$} if there exists a finite Borel measure $\sigma\ne 0$ on $\mathbb S^{d-1}$ such that for all $D\in \mathfrak B(\mathbb S^{d-1})$ with $\sigma(\partial D)=0$
\begin{eqnarray}
\lim_{r\downarrow0}\frac{R\left(|x|>rt:\frac{x}{|x|}\in D\right)}{R\left(|x|>r\right)} = t^{-\varrho}\frac{\sigma(D)}{\sigma(\mathbb S^{d-1})}.
\end{eqnarray}
When this holds we write $R \in RV_{-\varrho}(\sigma)$.
\end{definition}

Clearly a measure $R\in RV_{-\varrho}(\sigma)$ if and only if there exists a slowly varying function $\ell$ such that for all $D\in\mathfrak B(\mathbb S^{d-1})$ with $\sigma(\partial D)=0$
\begin{eqnarray}\label{eq: alt defn of reg var meas}
R(|x|>t, x/|x|\in D) \sim \sigma(D) t^{-\varrho}\ell(t) \ \mbox{as\ } t\rightarrow\infty.
\end{eqnarray}
It is well-known (see e.g.\ \cite{Basrak:Davis:Mikosch:2002}) that if $R\in RV_{-\varrho}(\sigma)$ then
\begin{eqnarray}\label{eq: moments of RV measure}
\int_{|x|\ge T}|x|^\gamma R(\rd x)\left\{
\begin{array}{lr}
<\infty & \mathrm{if}\ \gamma<\varrho\\
=\infty & \mathrm{if}\ \gamma>\varrho
\end{array}\right..
\end{eqnarray}

Let $\mu=\ts(R,b)$. If $\alpha\in(0,2)$ then Theorem \ref{thrm:moments} implies that $\int_{\mathbb R^d}|x|^\varrho\mu(\rd x)<\infty$ for all $\varrho\in[0,\alpha)$, and hence, by \eqref{eq: moments of RV measure} $\mu$ cannot have regularly varying tails with index $\varrho <\alpha$. However, other tail indices are possible. We will now categorize when $\mu$ has regularly varying tails. 

\begin{theorem}\label{thrm: eq rv R and M}
Fix $\alpha<2$, $p>0$. Let $\mu=TS^p_\alpha(R,b)$ and let $M$ be the L\'evy measure of $\mu$. If $\varrho > \alpha\vee0$ then
\begin{eqnarray}
\mu\in RV^\infty_{-\varrho}(\sigma) \Longleftrightarrow M\in RV^\infty_{-\varrho}(\sigma) \Longleftrightarrow R\in RV^\infty_{-\varrho}(\sigma).
\end{eqnarray}
Moreover, if $M\in RV^\infty_{-\varrho}(\sigma)$ then for all $D\in\mathfrak B(\mathbb S^{d-1})$ with $\sigma(\partial D)=0$ and $\sigma(D)>0$
$$
\lim_{r\rightarrow\infty}\frac{R\left(|x|>r, x/|x|\in D\right)}{M\left(|x|>r, x/|x|\in D\right)} = \frac{p}{\Gamma\left(\frac{\varrho-\alpha}{p}\right)}.
$$
\end{theorem}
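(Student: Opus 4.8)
The three equivalences split naturally into the pair $M\leftrightarrow R$, where the tempering kernel enters, and the pair $\mu\leftrightarrow M$, which is a general fact about infinitely divisible laws. Throughout I fix a cone $D\in\mathfrak B(\mathbb S^{d-1})$ with $\sigma(\partial D)=0$ and $\sigma(D)>0$ and abbreviate $\nu_D(r)=R(|x|>r,\,x/|x|\in D)$. The starting point is a single identity: since $|tx|=t|x|$ and $tx/|tx|=x/|x|$ for $t>0$, the event in \eqref{eq:levy m} becomes $\{x/|x|\in D,\ t>r/|x|\}$, so Fubini (the integrand is nonnegative) yields
\begin{eqnarray}\label{eq: prop M from nu}
M(|y|>r,\,y/|y|\in D) &=& \int_0^\infty t^{-1-\alpha}e^{-t^p}\,\nu_D(r/t)\,\rd t .
\end{eqnarray}

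For the Abelian direction, assume $R\in RV^\infty_{-\varrho}(\sigma)$, so by \eqref{eq: alt defn of reg var meas} $\nu_D$ is regularly varying of index $-\varrho$. Dividing \eqref{eq: prop M from nu} by $\nu_D(r)$ and letting $r\to\infty$, the uniform convergence theorem for regular variation gives $\nu_D(r/t)/\nu_D(r)\to t^\varrho$ for each fixed $t>0$, while $\int_0^\infty t^{\varrho-\alpha-1}e^{-t^p}\rd t=p^{-1}\Gamma((\varrho-\alpha)/p)$, finite because $\varrho>\alpha$. Fatou already gives $\liminf\ge p^{-1}\Gamma((\varrho-\alpha)/p)$. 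For the matching upper bound I would split the integral at $t=r/r_0$ with $r_0$ a Potter threshold: on $(0,r/r_0]$ Potter's inequalities dominate $\nu_D(r/t)/\nu_D(r)$ by $C\,(t^{\varrho-\epsilon}\vee t^{\varrho+\epsilon})$, which is integrable against $t^{-1-\alpha}e^{-t^p}$ once $\epsilon<\varrho-\alpha$, so dominated convergence applies; on $(r/r_0,\infty)$ I would write $e^{-t^p}\le e^{-t^p/2}e^{-(r/r_0)^p/2}$ and recognize the remaining integral, after the rescaling $t\mapsto2^{1/p}t$, as a multiple of $M(|y|>r2^{-1/p})\le M(|y|>1)<\infty$, so this piece is at most $C\,e^{-(r/r_0)^p/2}$, which is $o(\nu_D(r))$ since $1/\nu_D$ grows only regularly. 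This proves $M\in RV^\infty_{-\varrho}(\sigma)$ and, on inverting the ratio, the ``moreover'' constant $p/\Gamma((\varrho-\alpha)/p)$.

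For the Tauberian direction I would turn \eqref{eq: prop M from nu} into a Laplace transform. The substitutions $s=r/t$ and then $v=s^{-p}$ give
\begin{eqnarray}\label{eq: prop laplace}
r^{\alpha}M(|y|>r,\,y/|y|\in D) &=& p^{-1}\int_0^\infty e^{-r^p v}\,v^{-\alpha/p-1}\nu_D(v^{-1/p})\,\rd v ,
\end{eqnarray}
so that $r^\alpha M(|y|>r,\cdot)$ is a constant times the Laplace transform, evaluated at $\lambda=r^p$, of the nonnegative function $\phi_D(v)=v^{-\alpha/p-1}\nu_D(v^{-1/p})$. If $M\in RV^\infty_{-\varrho}(\sigma)$ then the left side of \eqref{eq: prop laplace} is regularly varying of index $\alpha-\varrho$, hence the transform is regularly varying of index $-(\varrho-\alpha)/p$ as $\lambda\to\infty$. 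Karamata's Tauberian theorem (in its $\lambda\to\infty$ form, relating the transform at infinity to $\int_0^x\phi_D$ near the origin) together with the monotonicity of $\nu_D$ as the Tauberian condition then forces $\nu_D$ to be regularly varying of index $-\varrho$ at infinity; since the constant produced is proportional to $\sigma(D)$ for every cone, the same $\sigma$ is recovered, giving $R\in RV^\infty_{-\varrho}(\sigma)$.

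Finally, the equivalence $\mu\in RV^\infty_{-\varrho}(\sigma)\Leftrightarrow M\in RV^\infty_{-\varrho}(\sigma)$ (for $\varrho>0$, which holds since $\varrho>\alpha\vee0$) is the standard fact that an infinitely divisible distribution and its L\'evy measure have identical regularly varying tails, with $\mu(|x|>r,\cdot)\sim M(|x|>r,\cdot)$; I would invoke the multivariate formulation in \cite{Basrak:Davis:Mikosch:2002}. The one genuinely delicate point is the Tauberian direction above: verifying that the monotonicity of $\nu_D$ legitimately serves as the Tauberian condition (via the monotone density theorem applied to $\phi_D$, whose monotonicity is not immediate when $\alpha<0$) and that the spectral measure is recovered uniformly across cones. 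The Abelian direction, by contrast, is routine once \eqref{eq: prop M from nu} is in hand.
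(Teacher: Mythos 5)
Your split into the pair $\mu\leftrightarrow M$ and the pair $M\leftrightarrow R$ is the natural one, and your Abelian direction ($R$ regularly varying implies $M$ regularly varying, together with the limiting constant $p/\Gamma\bigl(\frac{\varrho-\alpha}{p}\bigr)$ obtained from $\int_0^\infty t^{\varrho-\alpha-1}e^{-t^p}\rd t=p^{-1}\Gamma\bigl(\frac{\varrho-\alpha}{p}\bigr)$) is correct; it is exactly the implication the paper says ``can be accomplished using standard tools.'' The paper itself does not treat the two directions separately: it writes $M_D(r)=-\int_{(0,\infty)}k(r/x)\,\rd R_D(x)$ with $k(s)=\int_s^\infty t^{-\alpha-1}e^{-t^p}\rd t$, computes the Mellin transform $\hat k(z)=-\frac{1}{pz}\Gamma\bigl(\frac{-z-\alpha}{p}\bigr)$, checks that it has no zeros together with a summability condition on $k$ and the boundedness of $R_D$ near the origin, and then invokes a Mellin--Stieltjes Tauberian theorem (Lemma \ref{lemma: Tauberian thrm for Stieltjes}, i.e.\ Theorems 4.4.2 and 4.9.1 of Bingham--Goldie--Teugels) to get the equivalence in one stroke. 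Your route through Karamata's Laplace Tauberian theorem for the hard direction is genuinely different and potentially more elementary; your Laplace-transform identity $r^{\alpha}M_D(r)=p^{-1}\int_0^\infty e^{-r^pv}v^{-\alpha/p-1}\nu_D(v^{-1/p})\rd v$ is correct, as is the citation of \cite{Basrak:Davis:Mikosch:2002} (the paper uses \cite{Hult:Lindskog:2006}) for $\mu\leftrightarrow M$.

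The gap is the final step of your Tauberian direction, which you flag but do not close, and whose location you misdiagnose. Karamata legitimately converts $M\in RV^\infty_{-\varrho}(\sigma)$ into $U(x)=\int_0^x\phi_D(v)\rd v\sim Cx^{(\varrho-\alpha)/p}\tilde\ell(1/x)$ as $x\downarrow0$, because $U$ is monotone; but to descend from $U$ to $\nu_D$ you need a monotone density argument, and $\phi_D(v)=v^{-(\alpha+p)/p}\nu_D(v^{-1/p})$ is a product of the nondecreasing factor $\nu_D(v^{-1/p})$ with a factor that is \emph{decreasing} whenever $\alpha>-p$ --- so monotonicity of $\phi_D$ is automatic only for $\alpha\le-p$ and is not evident for any $\alpha\in[0,2)$, the opposite of your ``not immediate when $\alpha<0$.'' The step can be repaired without monotonicity of $\phi_D$: undoing the substitution, Karamata's conclusion reads $\int_y^\infty s^{\alpha-1}\nu_D(s)\rd s\sim C'y^{\alpha-\varrho}\tilde\ell(y)$ as $y\to\infty$, and for $\lambda>1$ the monotonicity of $\nu_D$ alone gives
\begin{eqnarray*}
\nu_D(\lambda y)\int_y^{\lambda y}s^{\alpha-1}\rd s\ \le\ \int_y^{\lambda y}s^{\alpha-1}\nu_D(s)\rd s\ \le\ \nu_D(y)\int_y^{\lambda y}s^{\alpha-1}\rd s,
\end{eqnarray*}
from which the usual squeeze (divide by $y^{\alpha-\varrho}\tilde\ell(y)$, let $y\to\infty$, then $\lambda\downarrow1$) yields $\nu_D(y)\sim C'(\varrho-\alpha)y^{-\varrho}\tilde\ell(y)$, with a constant proportional to $\sigma(D)$ so that the same spectral measure is recovered on every cone. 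Until this (or an equivalent) argument is supplied, the implication $M\in RV^\infty_{-\varrho}(\sigma)\Rightarrow R\in RV^\infty_{-\varrho}(\sigma)$ --- precisely the direction for which the paper invokes ``heavier machinery'' --- is not established.
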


Before proving the theorem let us state a useful corollary. Recall that for $\gamma\in(0,2)$ a probability
measure $\mu$ is in the domain of attraction of a $\gamma$-stable distribution with spectral measure
$\sigma\ne0$ if and only if $\mu\in RV_{-\gamma}(\sigma)$. See e.g.\ \cite{Rvaceva:1962} or
\cite{Meerschaert:Scheffler:2001} although they make the additional assumption that the limiting stable
distribution is full.

\begin{corollary}\label{cor: doa for ts}
Fix $\alpha<2$, $p>0$, and let $\mu=\ts(R,b)$. If $\sigma\ne0$ is a finite Borel measure on $\mathbb S^{d-1}$ and $\gamma\in(0\vee\alpha,2)$ then $\mu$ is in the domain of attraction of a $\gamma$-stable with spectral measure $\sigma$ if and only if $R\in RV_{-\gamma}(\sigma)$.
\end{corollary}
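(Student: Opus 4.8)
The plan is to obtain the corollary directly from Theorem \ref{thrm: eq rv R and M} together with the classical domain-of-attraction characterization recalled immediately before its statement; no new analysis is needed, only an alignment of hypotheses and a splicing of two equivalences.

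First I would note that the hypothesis $\gamma\in(0\vee\alpha,2)$ forces $\gamma>\alpha\vee0$, so Theorem \ref{thrm: eq rv R and M} applies with $\varrho=\gamma$ and supplies the equivalence $\mu\in RV_{-\gamma}(\sigma)\Longleftrightarrow R\in RV_{-\gamma}(\sigma)$. Here I would briefly record that the superscript-$\infty$ notation $RV^\infty_{-\varrho}(\sigma)$ appearing in the theorem refers to the same notion of regular variation of the tails at infinity as the $RV_{-\varrho}(\sigma)$ of the Definition, as is made explicit by the equivalent formulation \eqref{eq: alt defn of reg var meas}; this identification is what lets the theorem's output be fed into the domain-of-attraction criterion. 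Next I would invoke that criterion: since $\gamma\in(0,2)$ and $\sigma\ne0$, a probability measure $\mu$ is in the domain of attraction of a $\gamma$-stable law with spectral measure $\sigma$ if and only if $\mu\in RV_{-\gamma}(\sigma)$. Chaining the two equivalences then yields that $\mu$ is in the domain of attraction of a $\gamma$-stable distribution with spectral measure $\sigma$ if and only if $\mu\in RV_{-\gamma}(\sigma)$, which in turn holds if and only if $R\in RV_{-\gamma}(\sigma)$, exactly as asserted.

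Because the heavy lifting is already done in Theorem \ref{thrm: eq rv R and M}, the only delicate points are bookkeeping rather than estimation. The first is the notational reconciliation just mentioned. The second, which I regard as the main thing to be careful about, is the scope of the cited criterion: the references \cite{Rvaceva:1962} and \cite{Meerschaert:Scheffler:2001} state the equivalence under the additional assumption that the limiting stable law is full, whereas here $\sigma$ is an arbitrary nonzero finite measure and the limit need not be full. I would therefore add a remark that the characterization persists for the possibly non-full stable limit determined by $\sigma$, since regular variation of $\mu$ with a prescribed $\sigma$ pins down the limit's spectral measure irrespective of fullness. With these two points settled, the corollary follows immediately.
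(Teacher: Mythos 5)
Your proposal is correct and matches the paper's (implicit) argument exactly: the corollary is obtained by combining Theorem \ref{thrm: eq rv R and M} with $\varrho=\gamma>\alpha\vee0$ and the classical criterion, recalled just before the theorem, that for $\gamma\in(0,2)$ a probability measure lies in the domain of attraction of a $\gamma$-stable law with spectral measure $\sigma\ne0$ if and only if it is in $RV_{-\gamma}(\sigma)$. Your side remarks on the $RV^\infty$ notation and on the fullness assumption in \cite{Rvaceva:1962} and \cite{Meerschaert:Scheffler:2001} are the same caveats the paper itself flags.
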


In Theorem \ref{thrm: eq rv R and M}, the relationship between the regular variation of $\mu$ and $M$ is well
know, see for example \cite{Hult:Lindskog:2006}. A proof of the fact that $R\in RV^\infty_{-\varrho}(\sigma)$
implies that $M\in RV^\infty_{-\varrho}(\sigma)$ can be accomplished using standard tools. 
However the other direction requires heavier machinery. For brevity, we use the same approach for both
directions.

Let $k:(0,\infty)\mapsto\mathbb R$ be a Borel function. The \textbf{Mellin transform} of $k$ is defined by
\begin{eqnarray}
\hat k(z) = \int_0^\infty u^{z-1}k(1/u)\rd u
\end{eqnarray}
for all $z\in\mathbb C$ for which the integral converges. We will need the following result, which combines Theorems 4.4.2 and 4.9.1 in \cite{Bingham:Goldie:Teugels:1987}.

\begin{lemma}\label{lemma: Tauberian thrm for Stieltjes}
Let $-\infty<\gamma<\rho<\tau<\infty$, $c\in\mathbb R$, and let $\ell$ be a slowly varying function. Assume that $k$ is a continuous and non-negative function on $(0,\infty)$ such that
\begin{eqnarray}\label{eq: sum cond}
\sum_{-\infty<n<\infty} \max\{e^{-\gamma n},e^{-\tau n}\} \sup_{e^n\le x\le e^{n+1}}k(x)<\infty
\end{eqnarray}
and
\begin{eqnarray}\label{eq: k hat not zero}
\hat k(z)\ne0 \ \ \mathrm{when} \ \Re z=\rho.
\end{eqnarray}
Let $U$ be a monotone, right continuous function on $(0,\infty)$ with
\begin{eqnarray}
\limsup_{r\downarrow0}\frac{|U(r)|}{r^\gamma}<\infty.
\end{eqnarray}
Then
\begin{eqnarray}
\int_{(0,\infty)} k(x/t)\rd U(t) \sim c\rho \hat k(\rho)x^\rho\ell(x) \ \ \mathrm{as}\ x\rightarrow\infty
\end{eqnarray}
if and only if
\begin{eqnarray}
U(x) \sim cx^\rho\ell(x) \ \ \mathrm{as}\ x\rightarrow\infty.
\end{eqnarray}
\end{lemma}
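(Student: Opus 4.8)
The plan is to obtain this statement by matching its hypotheses to the two results of \cite{Bingham:Goldie:Teugels:1987} that it combines, using one for the Abelian (forward) implication and the other for the Tauberian (reverse) implication. The first task is to put everything into the Mellin framework. Substituting $x=1/u$ in the definition of $\hat k$ gives $\hat k(z) = \int_0^\infty x^{-1-z}k(x)\rd x$, and the quantity $\int_{(0,\infty)}k(x/t)\rd U(t)$ is a Mellin--Stieltjes convolution whose asymptotics are governed by $\hat k$ evaluated on the line $\Re z = \rho$.

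Next I would verify that $\hat k$ is finite and analytic on the open strip $\gamma < \Re z < \tau$, so that the critical line $\Re z = \rho$ lies strictly inside it. Decomposing $\int_0^\infty x^{-1-z}k(x)\rd x$ into dyadic blocks $e^n\le x\le e^{n+1}$ and bounding $k$ by its supremum on each block, the $n$-th term is controlled, up to a constant, by $e^{-n\Re z}\sup_{e^n\le x\le e^{n+1}}k(x)$. Since $\gamma<\Re z<\tau$ forces $e^{-n\Re z}\le\max\{e^{-\gamma n},e^{-\tau n}\}$ for every $n$ (the positive-$n$ tail is dominated by $e^{-\gamma n}$ and the negative-$n$ tail by $e^{-\tau n}$), condition \eqref{eq: sum cond} is exactly what guarantees absolute convergence throughout the strip. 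In particular $\hat k(\rho)$ is a finite real number, and the non-vanishing hypothesis \eqref{eq: k hat not zero} is imposed on a line where $\hat k$ is genuinely defined.

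With this in hand the two directions are quoted from \cite{Bingham:Goldie:Teugels:1987}. The Abelian direction is the transparent one: writing $U(t)\sim ct^\rho\ell(t)$ and changing variables $s=x/t$ in the heuristically differentiated convolution $\int_0^\infty k(x/t)\,c\rho t^{\rho-1}\ell(t)\rd t = c\rho x^\rho\int_0^\infty k(s)s^{-\rho-1}\ell(x/s)\rd s$, then invoking slow variation of $\ell$, produces the constant $c\rho\hat k(\rho)$; rigor comes from the growth bound $\limsup_{r\downarrow0}|U(r)|/r^\gamma<\infty$, which controls the contribution near the origin, together with the domination supplied by \eqref{eq: sum cond}. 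The reverse, Tauberian, direction is where the real content lies: here \eqref{eq: k hat not zero} plays the role of the Wiener Tauberian condition, and the non-vanishing of $\hat k$ on $\Re z = \rho$ is what allows one to invert the convolution and recover $U(x)\sim cx^\rho\ell(x)$ from the asymptotics of $\int_{(0,\infty)}k(x/t)\rd U(t)$.

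The main obstacle I anticipate is not any single estimate but the bookkeeping needed to align our hypotheses with the precise statements cited. Two points require care. First, the convolution is taken against the Stieltjes measure $\rd U$ rather than a density, so one must ensure, via the monotonicity and right-continuity of $U$ and the bound near $0$, that the integral converges at both endpoints and that integration by parts is legitimate when passing between the function and transform pictures. Second, one must confirm that the normalizing constant delivered by the cited Abelian theorem is exactly $c\rho\hat k(\rho)$ and that the slowly varying function $\ell$ is preserved unchanged; this is a matter of tracking the change of variables above against the conventions of \cite{Bingham:Goldie:Teugels:1987}. The deep ingredient underlying the whole statement, and the reason the non-vanishing condition cannot be dropped, is the Wiener Tauberian theorem on which the reverse implication ultimately rests.
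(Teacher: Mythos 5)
The paper offers no proof of this lemma at all: it is stated as a direct combination of Theorems 4.4.2 and 4.9.1 of \cite{Bingham:Goldie:Teugels:1987}, which is exactly what you do, using one for the Abelian direction and the other (Wiener-type) for the Tauberian direction and checking that \eqref{eq: sum cond} makes $\hat k$ absolutely convergent on the strip $\gamma<\Re z<\tau$ containing the critical line. Your proposal is correct and follows the same route as the paper, with the hypothesis-matching spelled out in more detail than the paper bothers to give.
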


Let $\mu=\ts(R,b)$, let $M$ be the L\'evy measure of $\mu$, and assume that $\sigma\ne0$ is a finite Borel measure on $\mathbb S^{d-1}$.  For all $D\in \mathfrak B(\mathbb S^{d-1})$ with $\sigma(\partial D)=0$ define for $r>0$
$$
M_D(r) = M(|x|>r, x/|x|\in D)
$$
and
$$
R_D(r) = R(|x|>r, x/|x|\in D).
$$
Note that for any integrable function $f:\mathbb R\rightarrow \mathbb R$
\begin{eqnarray}\label{eq: integ RD}
\int_{x/|x|\in D} f(|x|)R(\rd x) = -\int_{(0,\infty)} f(x)\rd R_D(x).
\end{eqnarray}

\begin{lemma}\label{lemma: prep for rv of ts}
If $\varrho>\alpha\vee0$ and $\ell\in RV^\infty_0$ then
\begin{eqnarray*}
M_D(r)\sim \sigma(D)p^{-1}\Gamma\left(\frac{\varrho-\alpha}{p}\right)r^{-\varrho}\ell(r) \ \mbox{as} \  r\rightarrow\infty
\end{eqnarray*}
if and only if
\begin{eqnarray*}
R_D(r)\sim \sigma(D) r^{-\varrho}\ell(r) \ \mbox{as} \  r\rightarrow\infty.
\end{eqnarray*}
\end{lemma}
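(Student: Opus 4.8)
The plan is to derive both implications simultaneously from the single Tauberian equivalence in Lemma~\ref{lemma: Tauberian thrm for Stieltjes}, after recognizing $M_D$ as a Mellin-type convolution of $R_D$ against a fixed kernel.

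First I would rewrite $M_D$ in terms of $R_D$. Since for $t>0$ the point $tx$ has the same direction as $x$, formula \eqref{eq:levy m} gives
\[
M_D(r) = \int_{x/|x|\in D}\int_{r/|x|}^{\infty} t^{-1-\alpha}e^{-t^p}\,\rd t\,R(\rd x) = \int_{x/|x|\in D} K\!\left(r/|x|\right)R(\rd x),
\]
where $K(v):=\int_v^{\infty} t^{-1-\alpha}e^{-t^p}\,\rd t$. Applying \eqref{eq: integ RD} with $f(w)=K(r/w)$ turns this into
\[
M_D(r) = \int_{(0,\infty)} K(r/w)\,\rd U(w), \qquad U:=-R_D .
\]
Because $R_D$ is nonincreasing and right continuous, $U$ is nonpositive, nondecreasing and right continuous, hence admissible in Lemma~\ref{lemma: Tauberian thrm for Stieltjes}. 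Taking $\rho=-\varrho$ and $c=-\sigma(D)$, the relation $U(r)\sim c r^{\rho}\ell(r)$ is precisely $R_D(r)\sim\sigma(D)r^{-\varrho}\ell(r)$, and the conclusion $\int K(r/w)\,\rd U(w)\sim c\rho\,\hat K(\rho)r^{\rho}\ell(r)$ is precisely the assertion about $M_D$, provided the constant matches.

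I would then compute the Mellin transform of $K$. By Tonelli and the substitution $s=t^p$,
\[
\hat K(z) = \int_0^\infty u^{z-1}\!\int_{1/u}^\infty t^{-1-\alpha}e^{-t^p}\,\rd t\,\rd u = -\frac1z\int_0^\infty t^{-1-\alpha-z}e^{-t^p}\,\rd t = -\frac{1}{pz}\,\Gamma\!\left(\frac{-\alpha-z}{p}\right),
\]
valid for $\Re z<-(\alpha\vee0)$; the $u$-integral converges because $\Re z<0$ and the $t$-integral because $-\alpha-\Re z>0$. At $z=\rho=-\varrho$ (allowed since $\varrho>\alpha\vee0$) this gives $\hat K(-\varrho)=\frac{1}{p\varrho}\Gamma\!\left(\frac{\varrho-\alpha}{p}\right)$, so that
\[
c\rho\,\hat K(\rho) = (-\sigma(D))(-\varrho)\,\frac{1}{p\varrho}\Gamma\!\left(\frac{\varrho-\alpha}{p}\right) = \sigma(D)\,p^{-1}\Gamma\!\left(\frac{\varrho-\alpha}{p}\right),
\]
which is exactly the constant in the statement. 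Since $\Gamma$ has no zeros and $z\ne0$ on the line $\Re z=-\varrho$, the non-vanishing condition \eqref{eq: k hat not zero} is automatic.

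What remains is to check the regularity hypotheses of Lemma~\ref{lemma: Tauberian thrm for Stieltjes}, and this bookkeeping is the only delicate part. The kernel $K$ is continuous, nonnegative and decreasing; as $v\to\infty$ it decays super-exponentially (like $\tfrac1p v^{-\alpha-p}e^{-v^p}$), and as $v\downarrow0$ it grows like $v^{-\alpha}/\alpha$ if $\alpha>0$, like $-\log v$ if $\alpha=0$, and stays bounded if $\alpha<0$. For the growth bound on $U$ I would use $\int_{|x|\le1}|x|^2R(\rd x)<\infty$ and $R(|x|>1)<\infty$, both contained in \eqref{eq: integ cond on R}, to obtain $R_D(r)=O(r^{-2})$ as $r\downarrow0$; hence $\limsup_{r\downarrow0}|U(r)|/r^{\gamma}<\infty$ for any $\gamma\le-2$. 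I then need $\gamma<\rho<\tau$ satisfying the summability condition \eqref{eq: sum cond}: I would take $\gamma\le-2$ with $\gamma<-\varrho$ (so $\gamma=-2$ in the relevant range $\varrho<2$), and $\tau\in(-\varrho,-\alpha)$ when $\alpha>0$, $\tau\in(-\varrho,0)$ when $\alpha\le0$, an interval nonempty exactly because $\varrho>\alpha\vee0$. The main obstacle is verifying \eqref{eq: sum cond} itself. At $n\to+\infty$ the super-exponential decay of $K$ dominates the factor $e^{-\gamma n}$ and the corresponding tail of the series converges trivially; at $n\to-\infty$ the near-origin growth of $K$ (respectively $e^{-\alpha n}$, $|n|$, or bounded) must be absorbed by $e^{-\tau n}$, and the constraint $\tau<-\alpha$ (respectively $\tau<0$) is precisely what makes that geometric-type sum summable. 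Once these two one-sided sums are controlled and the signs from $U=-R_D$ and $c=-\sigma(D)$ are tracked, both directions of the equivalence follow at once from Lemma~\ref{lemma: Tauberian thrm for Stieltjes}.
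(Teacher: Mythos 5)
Your proposal is correct and follows essentially the same route as the paper: both express $M_D$ as a Mellin--Stieltjes convolution of the kernel $k(v)=\int_v^\infty t^{-1-\alpha}e^{-t^p}\,\rd t$ against $-R_D$, compute $\hat k(z)=-\frac{1}{pz}\Gamma\bigl(\frac{-z-\alpha}{p}\bigr)$ for $\Re z<-(\alpha\vee 0)$, and invoke Lemma \ref{lemma: Tauberian thrm for Stieltjes} with $\rho=-\varrho$, $\tau\in(-\varrho,-(\alpha\vee0))$, $\gamma\le-2$ with $\gamma<-\varrho$, checking the summability condition by splitting the sum at $n=0$ exactly as the paper does. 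The sign bookkeeping ($U=-R_D$, $c=-\sigma(D)$) and the origin bound $R_D(r)=O(r^{-2})$ from \eqref{eq: integ cond on R} also match the paper's argument.
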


\begin{proof}
For simplicity, let $\beta=\alpha\vee0$. Note that by \eqref{eq:levy m} and \eqref{eq: integ RD}
\begin{eqnarray*}
M_D(r) &=& \int_{x/|x|\in D}\int_{r|x|^{-1}}^\infty t^{-1-\alpha}e^{-t^p}\rd tR(\rd x)\\
&=& -\int_{(0,\infty)} \int_{r/x}^\infty t^{-1-\alpha}e^{-t^p}\rd t \rd R_D(x)\\
&=& -\int_{(0,\infty)} k(r/x) R_D(\rd x),
\end{eqnarray*}
where
\begin{eqnarray*}
k(s) = \int_s^\infty t^{-\alpha-1}e^{-t^p}\rd t = p^{-1}\int_{s^p}^\infty t^{-\alpha/p-1}e^{-t}\rd t.
\end{eqnarray*}
For $\Re z<-\beta$
\begin{eqnarray*}
\hat k(z) &=& \int_0^\infty u^{z-1}k(1/u)\rd u = \int_0^\infty u^{z-1}\int_{1/u}^\infty t^{-1-\alpha}e^{-t^p} \rd t\rd u\\
&=& \int_0^\infty u^{z+\alpha-1}\int_1^\infty t^{-1-\alpha}e^{-(t/u)^p}\rd t\rd u\\
&=& \int_0^\infty u^{-z-\alpha-1}e^{-u^p}\rd u\int_1^\infty t^{z-1}\rd t = -\frac{1}{pz}\Gamma\left(\frac{-z-\alpha}{p}\right).
\end{eqnarray*}
Thus, since $-\varrho<-\beta$
$$
k(-\varrho) = \frac{1}{p\varrho}\Gamma\left(\frac{\varrho-\alpha}{p}\right).
$$
From here, the result will follow by ma \ref{lemma: Tauberian thrm for Stieltjes}. We just need to verify that the assumptions hold.

It is easy to see that $k$ is a continuous, non-negative function on $(0,\infty)$ and that $\hat k(z)$ has no zeros. Fix $\tau\in(-\varrho,-\beta)$, $\gamma<-(\varrho\vee2)$, and let $C=\sup_{t\ge1}t^{-\alpha/p-1}e^{-t/2}$. Note that $\gamma<\tau<0$. We have
\begin{eqnarray*}
&& p\sum_{n=0}^\infty \max\{e^{-\gamma n},e^{-\tau n}\} \sup_{e^n\le x\le e^{n+1}}k(x)\\
&&\qquad\qquad = \sum_{n=0}^\infty e^{|\gamma| n} \int_{e^{np}}^\infty t^{-\alpha/p-1}e^{-t/2}e^{-t/2}\rd t\\
&&\qquad\qquad \le C\sum_{n=0}^\infty e^{|\gamma| n} \int_{e^{np}}^\infty e^{-t/2}\rd t = 2C\sum_{n=0}^\infty e^{|\gamma| n} e^{-e^{np}/2}<\infty
\end{eqnarray*}
and
\begin{eqnarray*}
&&p\sum_{-\infty<n\le -1} \max\{e^{-\gamma n},e^{-\tau n}\} \sup_{e^n\le x\le e^{n+1}}k(x)\\
&&\qquad\qquad = \sum_{n=1}^\infty e^{-|\tau| n} \int_{e^{-np}}^\infty t^{-\alpha/p-1}e^{-t}\rd t.
\end{eqnarray*}
When $\alpha<0$ this is bounded by
\begin{eqnarray*}
\int_0^\infty t^{-\alpha/p-1}e^{-t}\rd t\sum_{n=1}^\infty e^{-|\tau| n} <\infty.
\end{eqnarray*}
When $\alpha=0$ it is bounded by
\begin{eqnarray*}
\sum_{n=1}^\infty e^{-|\tau| n} \left(\int_{e^{-np}}^1 t^{-1}\rd t+\int_1^\infty e^{-t}\rd t\right) =\sum_{n=1}^\infty e^{-|\tau| n}\left(np +e^{-1}\right)<\infty.
\end{eqnarray*}
When $\alpha\in(0,2)$ it is bounded by
\begin{eqnarray*}
\sum_{n=1}^\infty e^{-|\tau| n} \int_{e^{-np}}^\infty t^{-\alpha/p-1}\rd t = \frac{p}{\alpha}\sum_{n=1}^\infty e^{-|\tau| n+\alpha n} = \frac{p}{\alpha}\sum_{n=1}^\infty e^{-(|\tau|-\alpha) n}<\infty.
\end{eqnarray*}

Recall that $\gamma<-2$. Note that $-R_D(r)$ is a right continuous, monotonely increasing function on $(0,\infty)$ with
\begin{eqnarray}
\limsup_{r\downarrow0}\frac{|-R_D(r)|}{r^\gamma} &\le& \limsup_{r\downarrow0}r^2\int_{|x|>r} R(\rd x)\nonumber\\
&\le& \limsup_{r\downarrow0}\int_{1>|x|>r} |x|^2 R(\rd r) + \limsup_{r\downarrow0}r^2R\left(|x|\ge1\right)\nonumber\\
&\le& \int_{|x|<1} |x|^2 R(\rd r)<\infty.\nonumber
\end{eqnarray}
This completes the proof of Lemma \ref{lemma: prep for rv of ts}.
\end{proof}

The proof of Theorem \ref{thrm: eq rv R and M} follows immediately from Lemma \ref{lemma: prep for rv of ts} and \eqref{eq: alt defn of reg var meas}.

\section*{Acknowledgements}

Most of the research for this paper was done while the author was a PhD student working with Professor Gennady Samorodnitsky.  Professor Samorodnitsky's comments and support are gratefully acknowledged.

\bibliographystyle{plain}
\bibliography{Grabchak}

\end{document}